\newcommand\blfootnote[1]{%
  \begingroup
  \renewcommand\thefootnote{}\footnote{#1}%
  \addtocounter{footnote}{-1}%
  \endgroup
}
\newtheorem{thm}{Theorem}[section]
\newtheorem{cor}[thm]{Corollary}
\DeclareMathOperator{\im}{im}
\DeclareMathOperator{\spa}{span}
\newcommand*{\HM}{\hspace{2pt}\widecheck{\hspace{-2pt}\textit{HM}}\hspace{0pt}}
\newcommand*{\HS}{\hspace{3pt}\widecheck{\hspace{-3pt}\textit{HS}\hspace{1pt}}\hspace{0pt}}
\theoremstyle{definition}
\newtheorem{lem}[thm]{Lemma}
\newtheorem{conj}[thm]{Conjecture}
\newtheorem{exmp}[thm]{Example}
\theoremstyle{remark}
\newtheorem{lem*}{Lemma}
\title{On the Pin(2)-Equivariant Monopole Floer Homology of Plumbed 3-Manifolds}
\author{Irving Dai}
\address{Princeton University, Princeton, NJ 08540, USA}
\email{idai@math.princeton.edu}
\begin{document}
\maketitle
\begin{abstract}  
We compute the Pin(2)-equivariant monopole Floer homology for the class of plumbed 3-manifolds considered by Ozsv\'ath and Szab\'o in \cite{OS}. We show that for these manifolds, the Pin(2)-equivariant monopole Floer homology can be calculated in terms of the Heegaard Floer/monopole Floer lattice complex defined by N\'emethi \cite{Nem3}. Moreover, we prove that in such cases the ranks of the usual monopole Floer homology groups suffice to determine both the Manolescu correction terms and the Pin(2)-homology as an abelian group. As an application of this, we show that $\beta(-Y, s) = \bar{\mu}(Y, s)$ for all plumbed 3-manifolds with at most one ``bad" vertex, proving (an analogue of) a conjecture posed by Manolescu in \cite{Man3}. Our proof also generalizes results by Stipsicz \cite{Stip} and Ue \cite{Ue} relating $\bar{\mu}$ with the Ozsv\'ath-Szab\'o $d$-invariant. Some observations aimed at extending our computations to manifolds with more than one bad vertex are included at the end of the paper.
\end{abstract}
        
\blfootnote{This work was carried out with the support of NSF grant number DGE 1148900.} 
\begin{section}{Introduction}\label{sec1}
\noindent
The goal of this paper is to compute the Pin(2)-equivariant monopole Floer homology of a certain family of plumbed 3-manifolds using the lattice cohomology construction of N\'emethi \cite{Nem3}. First introduced by Manolescu \cite{Man1} and further developed by Lin \cite{Lin1}, Pin(2)-equivariant monopole Floer homology is a modification of the usual Seiberg-Witten Floer homology for 3-manifolds that takes advantage of an extra $\mathbb{Z}/2\mathbb{Z}$-symmetry in the Chern-Simons-Dirac functional. Our approach in this paper is to use the lattice cohomology framework for computing Heegaard Floer homology developed in e.g.\ \cite{OS}, \cite{Nem1}, \cite{Nem3}, together with a Gysin sequence relating the usual and Pin(2)-equivariant monopole Floer homologies. We show that in the case of plumbed 3-manifolds with at most one ``bad" vertex \cite{OS} or, more generally, almost-rational plumbings \cite{Nem3}, the Pin(2)-homology is in fact determined by the lattice complex. This class of 3-manifolds does not include all plumbed 3-manifolds, but is large enough to contain (for example) all Seifert fibered rational homology spheres. \\
\\
Our approach to computing the Pin(2)-homology by constraining it through the Gysin sequence is taken from \cite{Lin2}, in which it is used to compute the Pin(2)-homology of various Seifert spaces. After completing the current work, the author also learned that many of the same results for Seifert rational homology spheres have been obtained by Stoffregen in \cite{Stoff}. The approach there involves explicitly understanding Manolescu's Seiberg-Witten Floer spectrum (see e.g.\ \cite{Man2}), whereas our approach is more combinatorial in nature. Theorems \ref{prop1} and \ref{prop2} below should be compared with (for example) Theorem 1.1 and Corollary 1.2 in Stoffregen's work. \\
\\
It should be noted that we work with the formulation of Pin(2)-equivariant monopole Floer homology developed by Lin, rather than the original definition via Seiberg-Witten Floer spectra given by Manolescu. Currently, these two theories are only conjecturally isomorphic (see \cite{Lid} for the non-equivariant case). Thus, our results are technically only valid in Lin's setting, whereas (for example) Stoffregen's computations hold for Manolescu's original formulation. However, we expect that the algebraic arguments given in this paper can be carried out for both theories. \\
\\
The organization of this paper is as follows. In the next two subsections, we review the details of Pin(2)-equivariant monopole Floer homology and lattice cohomology that we will need for our computations. In Section \ref{sec2}, we prove the following theorems on the Pin(2)-homology of plumbed 3-manifolds, outlined here for motivation: 

\begin{thm}\label{prop1}
Let $Y$ be a rational homology 3-sphere given by surgery on a connected, negative-definite graph with at most one bad vertex (in the sense of \cite{OS}). Let $s$ be a self-conjugate spin$^c$ structure on $Y$. Then the orientation-reversed Pin(2)-equivariant monopole Floer homology $\HS(-Y, s)$ may be computed from the lattice complex of Nem\'ethi \cite{Nem3}.
\end{thm}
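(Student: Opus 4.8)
The plan is to pin down $\HS(-Y,s)$ by feeding the known structure of $\HM(-Y,s)$ through the Gysin exact sequence recalled in Section~\ref{sec1}. Since a connected negative-definite graph with at most one bad vertex is almost-rational in the sense of N\'emethi, we may work throughout with an arbitrary almost-rational plumbing; for such a manifold $\HM(-Y,s)$ agrees with the lattice cohomology $\mathbb{H}^*(\Gamma,k_s)$ of N\'emethi by the theorems of Ozsv\'ath--Szab\'o and N\'emethi reviewed above. Write $\HM(-Y,s)=\mathcal{T}^+_{(d)}\oplus\HM^{\mathrm{red}}(-Y,s)$. The one structural input driving the argument is that, for an almost-rational graph, the higher lattice cohomology vanishes, so $\HM(-Y,s)$ is supported entirely in gradings of a single parity (those congruent to $d$ modulo $2$); in particular $\HM^{\mathrm{red}}(-Y,s)$ is a finite $\mathbb{F}[U]$-module concentrated in one parity, with $U$-module structure read off from the graded root of $\Gamma$.

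Next I would substitute this into the Gysin sequence. Because $\HM(-Y,s)$ vanishes in gradings of the ``wrong'' parity, every degree-preserving map in that sequence having $\HM$ as source or target is automatically zero there, and the long exact sequence collapses into short exact sequences relating the graded pieces of $\HS(-Y,s)$, the $Q$-action on $\HS(-Y,s)$, and $\HM(-Y,s)$. Carrying this through identifies $\HS(-Y,s)$ as a graded group in terms of $\HM(-Y,s)$ and the connecting maps of the Gysin sequence; and those maps are forced, since the lattice model supplies $\HM(-Y,s)$ together with its full $U$-module structure, while the $\overline{HS}$-part --- standard, and the same for every $Y$ --- fixes the grading shift and the behaviour of the towers.

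Finally one must upgrade this to an identification of $\HS(-Y,s)$ as a module over $\mathcal{R}=\mathbb{F}[V][Q]/(Q^3)$: the $V$-action on $\HS^{\mathrm{red}}(-Y,s)$ is intertwined through the Gysin maps with the $U$-action on $\HM^{\mathrm{red}}(-Y,s)$ dictated by the graded root, while the $Q$-action is the Gysin map itself, which the parity constraint forces into its most degenerate allowable form; combining the tower part ($\overline{HS}$ with the images of the $V$-towers) with the reduced part then expresses $\HS(-Y,s)$ entirely through the combinatorics of the lattice complex of $\Gamma$, as asserted. The delicate point is this last rigidity --- ruling out hidden $V$-extensions in $\HS^{\mathrm{red}}(-Y,s)$ and determining how $Q$ meets the internal $V$-towers, using only the Gysin sequence, the $U$-module structure of $\HM(-Y,s)$, and the standard form of $\overline{HS}$. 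I expect this to require working directly with the explicit combinatorial model --- an induction reducing the almost-rational graph along a good vertex, or a hands-on analysis of the graded root --- rather than a formal diagram chase; and it is precisely here that the hypothesis of at most one bad vertex (equivalently, almost-rationality) is indispensable, since it supplies the single-parity property without which the Gysin sequence carries genuine indeterminacy.
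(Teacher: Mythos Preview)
Your overall strategy---feeding the even-parity support of $\HM(-Y,s)$ through the Gysin sequence---is exactly the paper's starting point, but there is a genuine gap at the step where you assert that ``those maps are forced, since the lattice model supplies $\HM(-Y,s)$ together with its full $U$-module structure.'' The $U$-module structure of $\HM(-Y,s)$ by itself does not determine the composite $\iota_*\circ\pi_*:\HM\to\HM$ sitting inside the Gysin sequence, and it is precisely the rank of this map in each grading that controls the ranks of $\HS$. Even granting the parity constraint and $Q^3=0$, different square-zero $U$-equivariant endomorphisms of $\HM$ are formally compatible with the shape of the Gysin sequence and yield different answers for $\HS$; the sequence does not ``collapse'' far enough to remove this ambiguity.

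The paper's key step (Lemma~\ref{prop8}) is to identify $\iota_*\circ\pi_*$ with $1+J$, where $J$ is the involution on $\mathbb{H}^0(\Gamma,k)$ induced by the lattice reflection $x\mapsto -x-k$ (geometrically, conjugation of spin$^c$ structures on $W(\Gamma)$). This is extra structure carried by the lattice complex but invisible in the $U$-module $\HM$. With that identification in hand, the paper proves (Lemma~\ref{prop4}, via the Lefschetz fixed-point theorem applied to $J$ acting on each sublevel set $S_n$) that the $J$-fixed part of $\mathbb{H}^0$ is at most one-dimensional in every grading; this is the structural fact that forces the Gysin sequence into the rigid form you are hoping for, and it is what makes Corollary~\ref{prop5} and Theorem~\ref{prop7} possible. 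The ``delicate point'' you flag is real, but it is resolved by this $J$-analysis rather than by graded-root combinatorics or an induction along good vertices; neither of those routes naturally produces the conjugation symmetry, which is the missing ingredient.
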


\begin{thm}\label{prop2}
Let $Y$ be a rational homology 3-sphere given by surgery on a connected, negative-definite graph with at most one bad vertex (in the sense of \cite{OS}). Let $s$ be a self-conjugate spin$^c$ structure on $Y$. Then the ranks of the usual monopole Floer homology determine both the Manolescu correction terms of $\HS(-Y, s)$ and also $\HS(-Y, s)$ as an abelian group.
\end{thm}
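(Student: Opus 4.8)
\medskip
\noindent\emph{Proof idea.} The plan is to bootstrap from Theorem~\ref{prop1}. By that theorem --- and the explicit nature of the computation there --- both ${\HM}(-Y,s)\cong HF^+(-Y,s)$ and ${\HS}(-Y,s)$ are functions of the N\'emethi lattice complex of the plumbing; since the graph has at most one bad vertex this is the lattice complex of an almost-rational plumbing, so its lattice homology is concentrated in a single degree and is encoded by a graded root $R$. Consequently ${\HM}(-Y,s)$ has the shape $\mathcal{T}^+_d\oplus HF^+_{\mathrm{red}}$ with $HF^+_{\mathrm{red}}$ a finite direct sum of cyclic $\mathbb{F}[U]$-towers, and --- the structural feature I rely on hardest --- is supported in gradings of a single parity (call it the good parity, $\equiv d\pmod{2}$); correspondingly ${\HS}(-Y,s)$ splits as an infinite tower $\mathcal{V}^+$ (the $\mathrm{Pin}(2)$-analogue of $\mathcal{T}^+$), whose three bottom gradings record the Manolescu invariants $\gamma\le\beta\le\alpha$ after the usual normalization, together with a $v$-torsion part ${\HS}_{\mathrm{red}}$. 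Since ``the ranks of the usual monopole Floer homology'' means the function $k\mapsto\dim_{\mathbb{F}}HF^+_k(-Y,s)$ --- equivalent data to $k\mapsto\dim_{\mathbb{F}}\widehat{HF}_k(-Y,s)$ --- it suffices to establish both assertions for an arbitrary graded root in terms of this rank function.

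\medskip
\noindent For the correction terms I would first read $d=d(-Y,s)$ off the ranks (it is the minimal grading in which $\widehat{HF}(-Y,s)$ is supported), and then locate the three strands $1,Q,Q^2$ of $\mathcal{V}^+$ by tracking them against the single infinite tower $\mathcal{T}^+_d$ of ${\HM}$ through the Gysin sequence relating ${\HM}$ and ${\HS}$. Two of the three bottom gradings should be pinned directly in terms of $d$; the third --- which I expect to be $\beta$ --- is displaced from $d$ by twice a quantity governed by the behavior of $HF^+_{\mathrm{red}}$ near its bottom grading (the size of the first ``step'' of the graded root). Proving that this displacement is itself computable from $\dim_{\mathbb{F}}HF^+_*$ is the heart of the correction-term statement; it is the graded-root analogue of the identities of Stipsicz and Ue comparing $\bar\mu$ with the $d$-invariant, and is precisely the input needed for the application $\beta(-Y,s)=\bar\mu(Y,s)$.

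\medskip
\noindent With $\alpha,\beta,\gamma$ in hand, the graded dimensions of ${\HS}(-Y,s)$ follow from the same Gysin sequence for $(-Y,s)$, schematically
\[
\cdots\longrightarrow{\HS}_{k}\xrightarrow{\ \cdot Q\ }{\HS}_{k-1}\longrightarrow{\HM}_{k-2}\longrightarrow{\HS}_{k-2}\longrightarrow\cdots,
\]
together with the single-parity property and the relation $Q^3=0$ in ${\HS}$. Because ${\HM}(-Y,s)$ vanishes outside the good parity, multiplication by $Q$ is an isomorphism ${\HS}_{k}\to{\HS}_{k-1}$ whenever $k$ lies in the bad parity (i.e.\ $k\not\equiv d\pmod 2$), while in the good parity the sequence collapses to short exact sequences involving ${\HM}$ and the maps ${\HS}_{\mathrm{red}}\to{\HM}$; here the image of ${\HS}_{\mathrm{red}}$ in ${\HM}$ is the cokernel of the $U$-action on $HF^+_{\mathrm{red}}$, again a function of the ranks. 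Solving these relations downward from large gradings --- where ${\HS}$ agrees with $\mathcal{V}^+$ and is pinned by $\alpha,\beta,\gamma$ --- then determines $\dim_{\mathbb{F}}{\HS}_k(-Y,s)$ for every $k$, hence ${\HS}(-Y,s)$ as an abelian group.

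\medskip
\noindent The principal obstacle lurks behind both steps: a priori ${\HS}(-Y,s)$ and the invariants $\alpha,\beta,\gamma$ are not determined by the rank function --- indeed for a general $3$-manifold they are not, being sensitive to the full $\mathbb{F}[U]$-module structure of $HF^+$ (and more). The theorem asserts that this sensitivity disappears under the one-bad-vertex hypothesis, and making that precise is where the work lies: one must show that, when $HF^+_{\mathrm{red}}$ comes from a graded root, its finite towers interleave in the rigid fashion dictated by $R$, so that every connecting map in the above Gysin sequence --- and, for the correction terms, the precise position of the $\mathcal{V}^+$-tower --- is forced by $\dim_{\mathbb{F}}HF^+_*$ alone. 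Isolating this rigidity, and verifying that it genuinely fails once there are two or more bad vertices, is the technical core; granting it, both claims reduce to linear algebra over $\mathbb{F}$.
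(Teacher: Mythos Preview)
Your outline has the right large-scale architecture (single-parity support, Gysin sequence, graded root), but it misses the specific mechanism that makes the theorem go through, and in one place your guess is simply wrong.

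The paper does not extract the correction terms by ``tracking the three strands of $\mathcal{V}^+$ against $\mathcal{T}^+_d$'' in the Gysin sequence. The key object is the involution $J$ on the lattice cohomology (reflection through $-k/2$), which is identified with $\iota_*\circ\pi_*$ in the Gysin sequence. One then decomposes $\mathbb{H}^0(\Gamma,k)$ in each grading as $E\oplus F$, where $E$ is spanned by components occurring in $J$-exchanged pairs and $F$ by $J$-invariant components. A Lefschetz fixed-point argument shows $\dim F\le 1$ in every grading, so the rank of $\HM$ is odd precisely when $F\ne 0$; this yields a single ``parity invariant'' $\rho$ such that $\HM$ has odd rank in gradings $\ge\rho$ and even rank below. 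The correction terms are then $\alpha(-Y,s)=\beta(-Y,s)=\rho/2$, while $\gamma(-Y,s)$ is the one determined by $\delta$ (with a mod-$4$ case split). So your guess that $\beta$ is the displaced invariant is backwards: $\gamma$ is pinned by $d$, and it is $\alpha=\beta$ that sit at $\rho/2$, possibly far above $d$. Without isolating $\rho$ via the odd/even rank dichotomy, you have no way to read $\alpha$ and $\beta$ off the ranks.

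The same $E\oplus F$ decomposition is what determines the abelian-group structure of $\HS$: in each even grading the Pin(2)-rank is exactly $\lceil \tfrac{1}{2}\dim\HM\rceil$ (half of $\dim E$ plus $\dim F$), and in odd gradings it is the single $b$-tower. Your proposed route---solving the Gysin relations downward, using that ``the image of $\HS_{\mathrm{red}}$ in $\HM$ is the cokernel of $U$ on $HF^+_{\mathrm{red}}$''---is neither proved nor correct as stated; the actual constraint is that $\iota_*\circ\pi_*=1+J$, and the rigidity you allude to in your last paragraph is precisely the one-dimensionality of $F$, not an interleaving property of the towers in $HF^+_{\mathrm{red}}$.
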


\noindent
The family of 3-manifolds described above may be enlarged to the class of all 3-manifolds obtained by plumbings on almost-rational graphs (see \cite{Nem3}) with no additional difficulty, but we work in the original setting of \cite{OS} for convenience of exposition. For the precise statements of these theorems, see Theorems \ref{prop6} and \ref{prop7} below. In Section \ref{sec3}, we relate the Neumann-Siebenmann invariant $\bar{\mu}(Y, s)$ (defined in \cite{Neu}, \cite{Sieb}) with the lattice cohomology of $(Y, s)$. Combined with Theorem \ref{prop7}, this allows us to prove:

\begin{thm}\label{propconj}
Let $Y$ be a rational homology 3-sphere given by surgery on a connected, negative-definite graph with at most one bad vertex (in the sense of \cite{OS}). Let $s$ be a spin structure on $Y$ (which we may view as a self-conjugate spin$^c$ structure). Then $\beta(-Y, s) = \bar{\mu}(Y, s)$.
\end{thm}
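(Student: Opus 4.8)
The plan is to express both $\beta(-Y,s)$ and $\bar\mu(Y,s)$ in terms of N\'emethi's lattice cohomology $\mathbb{H}^{*}(G,k)$ of the pair $(Y,s)$ and then to identify the two expressions. For the left-hand side, Theorem \ref{prop7} (the precise form of Theorem \ref{prop2}), together with the Gysin-sequence computation of Section \ref{sec2} that proves Theorem \ref{prop6}, describes $\HS(-Y,s)$ as a graded $H^{*}(B\mathrm{Pin}(2))$-module entirely in terms of the lattice complex; in particular $\beta(-Y,s)$, which up to the standard grading shift is the bottom grading of the ``middle'' $v$-tower, becomes a computable function of the lattice data. The relevant structural point is that the conjugation symmetry responsible for $\mathrm{Pin}(2)$-homology corresponds, on the lattice side, to the involution $J$ acting by reflection through the Wu representative of $s$, and that in the Gysin analysis the middle tower is carried by a $J$-fixed class. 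For the right-hand side, the computations of Section \ref{sec3} identify $\bar\mu(Y,s)$ with the suitably normalized weight of the Wu representative $w$ in the lattice complex --- a quantity of the shape $-\tfrac18\bigl((w,w)+|G|\bigr)$, equivalently a specific grading in $\mathbb{H}^{*}(G,k)$. The theorem thus reduces to a comparison internal to the lattice complex.

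The heart of the matter is to show that the bottom of the middle $v$-tower of $\HS(-Y,s)$ is generated by the class of the Wu representative $w$. Since $s$ is self-conjugate, $w$ can be chosen $J$-fixed, and its weight is exactly what Section \ref{sec3} uses to compute $\bar\mu(Y,s)$; it therefore suffices to prove that, among the $J$-fixed lattice classes contributing to $\HS(-Y,s)$, the one sitting at the bottom of the middle tower is $w$ --- equivalently, that $w$ realizes the $J$-symmetric minimum of the weight function $\chi_{k}$. This is exactly where the hypothesis of at most one bad vertex is used in an essential way: it forces $\mathbb{H}^{q}(G,k)=0$ for $q\ge 1$ and, via N\'emethi's reduction theorem, collapses the whole computation onto the one-variable ``$\tau$-function'' attached to the bad vertex, in which picture the $J$-symmetry acts transparently, the $J$-symmetric part of $\mathbb{H}^{0}$ is pinned down, and a direct calculation shows that its lowest-graded generator is $w$. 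Feeding this back through Theorem \ref{prop7} gives $\beta(-Y,s)=\bar\mu(Y,s)$. (As a consistency check, in the case where all three $\mathrm{Pin}(2)$-towers coincide one recovers the relations of Stipsicz \cite{Stip} and Ue \cite{Ue} between $\bar\mu$ and the $d$-invariant.)

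I expect the main obstacle to be precisely this last identification: one must propagate the conjugation symmetry through the computation of Theorem \ref{prop6} finely enough to locate the $J$-fixed generator of the middle tower --- the rank information of Theorem \ref{prop2} pins down the $H^{*}(B\mathrm{Pin}(2))$-module structure but not, a priori, which lattice class occupies the bottom of that particular tower --- and then reconcile the module normalization of $\beta$ with the monopole/lattice grading normalization used for $\bar\mu$ in Section \ref{sec3}. Two bookkeeping checks should be carried alongside to keep the conventions honest: first, $\beta(-Y,s)\equiv\mu(Y,s)\equiv\bar\mu(Y,s)\pmod 2$ since both invariants lift the Rokhlin invariant, so any candidate formula must respect this parity; second, $\beta$ and $\bar\mu$ are both odd under orientation reversal, so the identification of $\HS(-Y,s)$ with the lattice cohomology of $(Y,s)$ (as opposed to $(-Y,s)$) must be invoked with the sign that yields $\beta(-Y,s)=\bar\mu(Y,s)$ and not its negative. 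Granting these points, the almost-rational structure renders the core combinatorial claim about the $\tau$-function and the Wu class a direct verification.
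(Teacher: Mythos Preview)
Your overall strategy matches the paper's: reduce via Theorem \ref{prop7} to $\beta(-Y,s)=\rho/2$, and then prove $\rho=2\bar\mu(Y,s)$. The difference lies entirely in how the second step is carried out, and here the paper's argument is both simpler and more direct than what you sketch.

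The paper does not invoke N\'emethi's reduction to the one-variable $\tau$-function at all. Instead, it chooses the Wu characteristic vector $w$ as the representative of $[k]$, so that the grading shift $\sigma(\Gamma,w)=-(|\Gamma|+w^2)/4$ already equals $2\bar\mu(Y,s)$. The problem then reduces to showing that the parity invariant $r$ of $\mathbb{H}^0(\Gamma,w)$ is zero, i.e., that the smallest lattice cube containing the $J$-fixed point $-w/2$ has weight exactly zero. The key input (Lemma \ref{lemconj}) is purely graph-theoretic: the basis vectors appearing in $w$ are pairwise non-adjacent in $\Gamma$, simply because $\Gamma$ is a tree and $w$ is characteristic. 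This orthogonality forces $(w,v)=(v,v)$ for every vertex $v$ of the cube, so $w_0(-w+v)=0$ identically --- a two-line computation with no reduction to a distinguished vertex. The ``at most one bad vertex'' hypothesis plays no role in this step; it is used only upstream, to secure the isomorphism of Theorem \ref{prop3} and the vanishing of higher $\mathbb{H}^q$ that feeds the Lefschetz argument of Lemma \ref{prop4}.

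Two points in your sketch need adjustment. First, $w$ is a characteristic vector, not a lattice class; the $J$-fixed generator in each grading is the connected component of $S_n$ containing the point $-w/2$, and the claim to prove is that this point first enters a sublevel set at $n=0$, not that ``$w$ realizes the $J$-symmetric minimum of $\chi_k$.'' Second, the obstacle you anticipate --- locating which lattice class sits at the bottom of the middle tower --- dissolves once Lemma \ref{prop4} is in hand: the $J$-invariant component in each grading is unique and is characterized by containing $-w/2$, so there is nothing left to identify beyond the weight computation above. Your proposed route through the $\tau$-function might be made to work, but it imports machinery the argument does not need.
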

\noindent
For Seifert integer homology spheres, this result is again due to Stoffregen \cite{Stoff} using a computation of $\bar{\mu}$ for such manifolds by Ruberman and Saveliev \cite{Ruber}; in our case, it is a straightforward corollary of the lattice cohomology framework. In particular, Theorem \ref{propconj} proves (in greater generality) Conjecture 4.1 of \cite{Man3}, albeit for the formulation of Pin(2)-equivariant homology given by Lin. As a byproduct of the proof, we also obtain a Heegaard Floer/monopole Floer homology characterization of $\bar{\mu}$ for all plumbed 3-manifolds with at most one bad vertex. This generalizes work of Stipsicz \cite{Stip} and Ue \cite{Ue} relating $\bar{\mu}(Y, s)$ and the Ozsv\'ath-Szab\'o $d$-invariant for rational surface singularities and spherical 3-manifolds, respectively. \\
\\
Finally, we end with some observations and examples aimed at extending our computations to manifolds with more than one bad vertex. Throughout the paper, we always work over the field of two elements $\mathbb{F} = \mathbb{F}_2$. 

\begin{subsection}{Pin(2)-Equivariant Monopole Floer Homology}
We begin by reviewing the essential tenets of Pin(2)-equivariant monopole Floer homology as given in \cite{Lin1}. Recall that for a closed 3-manifold $Y$ equipped with a spin$^c$ structure $s$, the monopole Floer homology groups are defined by studying the chain complex generated by critical points of the (perturbed) Chern-Simons-Dirac functional on a certain ``configuration space" associated to $Y$ and $s$. (See e.g.\ \cite{KM} for details.) This is a gauge-theoretic invariant which assigns to $(Y, s)$ three groups fitting into the long exact sequence
\[
\cdots \rightarrow \widebar{\textit{HM}\hspace{3pt}}\hspace{-3pt}(Y, s) \rightarrow \HM(Y, s) \rightarrow \hspace{2pt}\widehat{\hspace{-2pt}\textit{HM}}(Y, s) \rightarrow \cdots.
\]
Each of these groups are modules over the ring $\mathbb{F}[U]$, which may be thought of as the $S^1$-equivariant cohomology of a point. The monopole Floer homology groups have an absolute $\mathbb{Z}/2\mathbb{Z}$-grading and also a more refined relative $\mathbb{Z}/d\mathbb{Z}$-grading, where $d$ depends on the choice of $s$. If $s$ has torsion Chern class, then the latter is a relative $\mathbb{Z}$-grading. The action of $U$ has degree $-2$. \\
\\
If $Y$ is a rational homology sphere, then the relative $\mathbb{Z}$-grading corresponding to any spin$^c$ structure becomes an absolute $\mathbb{Q}$-grading, and we define the \textit{Fr\o yshov invariant} of $(Y, s)$ as follows \cite{Froy}. Let $\mathcal{U}_d^+$ be the $\mathbb{F}[U]$-module $\mathbb{F}[U^{-1}, U]/U\mathbb{F}[U]$, shifted so that the element $1$ has grading $d$. The monopole Floer homology $\HM(Y, s)$ decomposes into the direct sum of a finite part and a single infinite tower $\mathcal{U}_d^+$, the latter of which is canonically determined as the image of $\widebar{\textit{HM}\hspace{3pt}}\hspace{-3pt}(Y, s)$ in the long exact sequence above. The Fr\o yshov invariant $\delta(Y, s) = d/2 \in \mathbb{Q}$ is defined to be half of the grading shift of this infinite $U$-tower. \\
\\
In the case that $s$ is self-conjugate, it turns out that the Chern-Simons-Dirac functional has an extra $\mathbb{Z}/2\mathbb{Z}$-symmetry, which allows us to consider the subcomplex consisting of $\mathbb{Z}/2\mathbb{Z}$-invariant chains of critical points and flows between such chains. The main analytical difficulty in doing this is that for perturbations of the Chern-Simons-Dirac functional which preserve the $\mathbb{Z}/2\mathbb{Z}$-symmetry, critical points necessarily occur in entire submanifolds, rather than isolated points. Nevertheless, once the appropriate theory is defined (see \cite{Lin1}), taking the homology of this invariant subcomplex yields a similar triple of gauge-theoretic invariants fitting into the analogous long exact sequence
\[
\cdots \rightarrow \widebar{\textit{HS}\hspace{3pt}}\hspace{-2pt}(Y, s) \rightarrow \HS(Y, s) \rightarrow \hspace{3pt}\widehat{\hspace{-3pt}\textit{HS}\hspace{1pt}}\hspace{0pt}(Y, s) \rightarrow \cdots.
\]
Each of these groups are modules over the ring 
\[
\mathcal{R} = \mathbb{F}[V][Q]/(Q^3),
\]
which may be thought of as the Pin(2)-equivariant cohomology of a point. For each self-conjugate spin$^c$ structure $s$, the Pin(2)-equivariant monopole Floer homology groups are graded by the same object as their non-equivariant counterparts, and in particular have a relative $\mathbb{Z}$-grading. The actions of $V$ and $Q$ have degrees $-4$ and $-1$, respectively. \\
\\
The relation between the usual and Pin(2)-equivariant monopole Floer homologies (for a self-conjugate spin$^c$ structure $s$ on $Y$) is expressed by the Gysin sequence (see Section 4.3 of \cite{Lin1})
\begin{equation}\label{eq1}
\cdots \xrightarrow{\cdot Q} \HS(Y, s) \xrightarrow{\iota_*} \HM(Y, s) \xrightarrow{\pi_*} \HS(Y, s) \xrightarrow{\cdot Q} \HS(Y, s) \xrightarrow{\iota_*} \cdots,
\end{equation}
which should be thought of as analogous to the usual Gysin sequence in algebraic topology for $S^0$-bundles. (There are of course similar sequences for the other two flavors of monopole Floer homology.) This is a map of graded $\mathcal{R}$-modules, where $V$ acts on the usual monopole Floer homology as $U^2$ and $Q$ acts as zero. The maps $\iota_*$ and $\pi_*$ in the sequence preserve the grading, while multiplication by $Q$ has grading $-1$. \\
\\
If $Y$ is a rational homology sphere, then (as in the non-equivariant case) the relative $\mathbb{Z}$-grading on the Pin(2)-monopole Floer homology becomes an absolute $\mathbb{Q}$-grading, and we may define the three \textit{Manolescu correction terms} as follows \cite{Man3}. Let $\mathcal{V}_d^+$ be the $\mathbb{F}[V]$-module $\mathbb{F}[V^{-1}, V]/V\mathbb{F}[V]$, shifted so that the element $1$ has grading $d$. The Pin(2)-equivariant monopole Floer homology $\HS(Y, s)$ decomposes into the direct sum of a finite part and the sum of three infinite towers $\mathcal{V}_c^+ \oplus \mathcal{V}_b^+ \oplus \mathcal{V}_a^+$, where the action of $Q$ sends the $c$-tower onto the $b$-tower and the $b$-tower onto the $a$-tower. (Again, the three towers are canonically determined, even though the finite part of the decomposition is not.) We define $\alpha(Y, s) \geq \beta(Y, s) \geq \gamma(Y, s)$ to be the rational numbers such that
\[
a = 2\alpha(Y, s), b = 2\beta(Y, s) + 1, \text{and } c = 2\gamma(Y, s) + 2.
\]
See \cite{Man2} and \cite{Man3} for Manolescu's disproof of the triangulation conjecture using these invariants.

\end{subsection}

\begin{subsection}{Lattice Cohomology}
We now review the construction of lattice cohomology as given in \cite{Nem3}. This formulation is slightly removed from the original setup of e.g.\ \cite{OS}, \cite{Nem1}, but has the advantage of being somewhat conceptually and combinatorially clearer. For any positive integer $s$, let $\mathbb{Z}^s$ be the $s$-dimensional integer lattice, and denote by $\mathcal{Q}_q$ the set of $q$-dimensional side-length-one lattice cubes in $\mathbb{Z}^s$. (Thus $\mathcal{Q}_0$ is the set of lattice points, $\mathcal{Q}_1$ is the set of lattice edges, and so on.) Let $\{w_q\}$ (often denoted simply by $w$) be a collection of functions $w_q: \mathcal{Q}_q \rightarrow \mathbb{Z}$ from the set of all such cubes into $\mathbb{Z}$. We refer to such a set $\{w_q\}$ as a \textit{collection of weight functions}. Define $S_n \subseteq \mathbb{Z}^s$ to be the sublevel set of cubes (of any dimension) whose weights are at most $n$; i.e.,
\[
S_n(\mathbb{Z}^s, w) = \bigcup_q \{\Box_q \in \mathcal{Q}_q : w_q(\Box_q) \leq n \}.
\]
For each non-negative integer $i$, we then define
\[
\mathbb{S}^i(\mathbb{Z}^s, w) = \bigoplus_n H^i(S_n, \mathbb{F}),
\]
where $H^i(S_n, \mathbb{F})$ is the usual $i$-dimensional $\mathbb{Z}/2\mathbb{Z}$-cohomology of the sublevel set $S_n$, and the sum is taken over all $n$. We give this object a grading by declaring an element of $H^i(S_n, \mathbb{F})$ to have grading $2n$, and we define a $U$-action on $\mathbb{S}^i(\mathbb{Z}^s, w)$ by setting multiplication by $U$ on $H^i(S_n, \mathbb{F})$ to be equal to the map on cohomology
\[
i^*: H^i(S_{n}, \mathbb{F}) \rightarrow H^i(S_{n-1}, \mathbb{F})
\]
induced by the inclusion of $S_{n-1}$ into $S_n$. We then define the lattice cohmology of $(\mathbb{Z}^s, w)$ by putting all these objects together and letting $i$ vary: 
\[
\mathbb{H}^*(\mathbb{Z}^s, w) = \mathbb{S}^*(\mathbb{Z}^s, w).
\]
\\
Now let $Y$ be a rational homology sphere with a fixed spin$^c$ structure $s$. Let $\Gamma$ be a plumbing diagram for $Y$, so that $Y$ is the boundary of the 4-manifold $W(\Gamma)$ constructed by attaching 2-handles to $B^4$ according to the decorated graph $\Gamma$. We have a preferred basis of $H_2(W(\Gamma), \mathbb{Z})$ formed by capping off the cores of the attached 2-handles inside $B^4$, which gives an isomorphism between $H_2(W(\Gamma), \mathbb{Z})$ and the integer lattice $L_\Gamma$ spanned by these basis elements. Similarly, taking the co-cores of the 2-handles provides a preferred basis of $H_2(W(\Gamma), Y, \mathbb{Z})$ and identifies the relative homology with another integer lattice $L'_\Gamma$. The homology exact sequence 
\[
0 \rightarrow H_2(W(\Gamma), \mathbb{Z}) \rightarrow H_2(W(\Gamma), Y, \mathbb{Z}) \rightarrow H_1(Y, \mathbb{Z}) \rightarrow 0
\]
identifies $L_\Gamma$ as a sublattice of $L'_\Gamma$, and the intersection pairing on the former extends to a $\mathbb{Q}$-valued intersection pairing on the latter. We define the set of \textit{characteristic vectors} on $\Gamma$ to be the set 
\[
\kappa = \{k \in L'_\Gamma : (k, x) = (x, x) \text{ mod 2 for all } x \in L_\Gamma\}. 
\]
There is a natural action of $L_\Gamma$ on $\kappa$ given by $k \mapsto k + 2L_\Gamma$, and we denote the orbit of a characteristic vector $k$ under this action by $[k]$. Noting that each element of $\kappa$ corresponds to a spin$^c$ structure on $W(\Gamma)$, it is easily seen that $[k]$ consists of the set of spin$^c$ structures on $W(\Gamma)$ limiting to a particular spin$^c$ structure on $Y$. We may thus identify the set of orbits $\{[k]\}$ with the set of spin$^c$ structures on $Y$; under this identification, the self-conjugate spin$^c$ structures on $Y$ correspond to those orbits that lie in the sublattice $L_\Gamma$. \\
\\
Now suppose that we have fixed a characteristic vector $k$ whose spin$^c$ structure limits to $s$ on $Y$. We put a weight function $w$ on $L_\Gamma$ as follows:
\begin{enumerate}
\item For each lattice point $x \in L_\Gamma$, we set $w_0(x) = -((x, x) + (x, k))/2$; and
\item For each $q$-dimensional cube $\Box_q$, we let $w_q(\Box_q)$ be the maximum of $w_0(x)$ for $x$ ranging over the vertices of $\Box_q$.
\end{enumerate}
We denote the lattice cohomology of $\Gamma$ with respect to this weight function by $\mathbb{H}^*(\Gamma, k) = \mathbb{H}^*(L_\Gamma, w)$. The key result is that if the graph $\Gamma$ is sufficiently ``nice", then the resulting object is the same (up to a grading shift) as the Heegaard Floer/monopole Floer homology of $(-Y, s)$. (Here, we use the isomorphism of Kutluhan-Lee-Taubes \cite{KLT}, Colin-Ghiggini-Honda \cite{CGH}, and Taubes \cite{Taub} and make little distinction between the Heegaard Floer and monopole Floer homology throughout.) More precisely, we say that a vertex $v$ of $\Gamma$ is ``bad" in the sense of \cite{OS} if the inequality $m(v) > -d(v)$ holds, where $m(v)$ is the decoration of $\Gamma$ at $v$ and $d(v)$ is the valency of $v$. Then:

\begin{thm}[Theorem 1.2 of \cite{OS}, Theorem 5.2.2 of \cite{Nem3}]\label{prop3}
Let $Y$ be a rational homology 3-sphere obtained by surgery on a connected, negative-definite graph $\Gamma$ with at most one bad vertex. Let $s$ be a spin$^c$ structure on $Y$, and let $k$ be any characteristic vector on $\Gamma$ whose corresponding spin$^c$ structure on $W(\Gamma)$ limits to $s$. Let $\sigma$ be the rational grading shift
\[
\sigma = \sigma(\Gamma, k) = - \dfrac{1}{4}(|\Gamma| + k^2),
\]
where $|\Gamma|$ is the number of vertices in $\Gamma$. Then the following are true:
\begin{enumerate}
\item $\mathbb{H}^q(\Gamma, k) = 0$ for all $q > 0$,
\item $\HM_{\text{even}}(-Y, s) \cong \mathbb{H}^*(\Gamma, k)[\sigma] = \mathbb{H}^0(\Gamma, k)[\sigma]$ as graded $\mathbb{F}[U]$-modules, and
\item $\HM_{\text{odd}}(-Y, s) = 0$.
\end{enumerate}
Here, $\mathbb{H}^0(\Gamma, k)[\sigma]$ is the lattice cohomology of $(\Gamma, k)$ shifted by grading $\sigma$, so that an element which had grading zero before now has grading $\sigma$.
\end{thm}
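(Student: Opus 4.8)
The plan is to establish this in two stages: first reduce the computation of $\HM(-Y,s)$ to a purely combinatorial complex attached to characteristic vectors (this is \cite{OS}), and then reconcile that complex with the lattice cohomology $\mathbb{H}^*(\Gamma,k)$ while showing that its higher-degree part vanishes (this is \cite{Nem3}). For the first stage I would introduce the $\mathbb{F}[U]$-module $\mathbb{H}^+(\Gamma,[k])$ of finitely supported maps $\phi$ from the orbit $[k]$ of characteristic vectors to $\mathcal{U}^+_0=\mathbb{F}[U^{-1},U]/U\mathbb{F}[U]$ satisfying the adjunction relations: writing $t(k',v):=\tfrac12\big((k',v)+(v,v)\big)$ for a characteristic vector $k'$ and a vertex $v$, one imposes $U^{t(k',v)}\phi(k'+2v^*)=\phi(k')$ when $t(k',v)\ge 0$ and $\phi(k'+2v^*)=U^{-t(k',v)}\phi(k')$ when $t(k',v)\le 0$, where $v^*\in L'_\Gamma$ is dual to $v$. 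One then proves $\HM(-Y,s)\cong\mathbb{H}^+(\Gamma,[k])$ as graded $\mathbb{F}[U]$-modules by induction on a suitable complexity of $\Gamma$, using the integer surgery exact triangle together with blow-ups and handleslides — all of which preserve the hypothesis of at most one bad vertex — with the grading normalised so that the shift $\sigma=-\tfrac14(|\Gamma|+k^2)$ enters through the standard grading-shift formula for the cobordism $W(\Gamma)\setminus B^4$; the evenness of $\HM(-Y,s)$ and the vanishing of its odd part come out of the same induction. For this step I would simply cite \cite{OS}.

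The second stage is to match $\mathbb{H}^+(\Gamma,[k])$ with $\mathbb{H}^0(\Gamma,k)=\bigoplus_n H^0(S_n;\mathbb{F})$. The dictionary, due to N\'emethi, runs as follows. Parametrise $[k]$ by $L_\Gamma$ via $k'=k+2x$; then a short computation gives $w_0(x+v)-w_0(x)=-t(k',v)$, so the $1$-cube of $\mathbb{Z}^{|\Gamma|}$ joining $x$ and $x+v$ carries weight $\max\{w_0(x),w_0(x+v)\}$, and the adjunction relation at $(k',v)$ translates exactly into the assertion that the values of $\phi$ at $x$ and at $x+v$ are forced to coincide once those two lattice points lie in a common connected component of a sublevel set $S_n$. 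Unwinding this, the assignment sending $\phi$ to its coefficient functions in the powers $U^{-n}$ identifies $\mathbb{H}^+(\Gamma,[k])$ with $\bigoplus_n H^0(S_n;\mathbb{F})$ as graded $\mathbb{F}[U]$-modules, with the $U$-action corresponding to the maps $H^0(S_n;\mathbb{F})\to H^0(S_{n-1};\mathbb{F})$ induced by the inclusions $S_{n-1}\hookrightarrow S_n$, and with the internal grading (which puts $H^0(S_n;\mathbb{F})$ in degree $2n$) matched up to the $\HM$-grading precisely by the shift $\sigma$. This step works for any negative-definite $\Gamma$.

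The main obstacle is the vanishing $\mathbb{H}^q(\Gamma,k)=0$ for $q>0$, and this is where the ``at most one bad vertex'' hypothesis is genuinely needed: such a $\Gamma$ is \emph{almost rational}, meaning that lowering the framing at the unique bad vertex $v_0$ far enough produces a graph which is rational in the sense of Artin and Laufer, and for which $w_0$ has a single ``well'' and the lattice cohomology is a single tower concentrated in cohomological degree $0$. For an almost rational graph I would exploit the $v_0$-direction: minimising $w_0$ over the fibre of the $v_0$-coordinate above each $i\in\mathbb{Z}$ yields a one-variable function $\tau\colon\mathbb{Z}\to\mathbb{Z}$, and since $w_0$ restricted to such a fibre inherits the single-well behaviour of the rational case, one shows that each sublevel set $S_n$ deformation retracts — without changing its cohomology — onto the one-dimensional subcomplex lying over the $v_0$-axis determined by $\tau$. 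Since sublevel sets of a function of one variable are disjoint unions of intervals, this gives $\mathbb{H}^q(\Gamma,k)=0$ for $q>0$ and identifies $\mathbb{H}^0(\Gamma,k)$ with the ``graded root'' of $\tau$. This reduction is the technical heart of \cite{Nem3}, and I would import it directly. Granting it, the odd part of $\HM(-Y,s)$ vanishes and $\HM_{\text{even}}(-Y,s)\cong\mathbb{H}^0(\Gamma,k)[\sigma]$ as graded $\mathbb{F}[U]$-modules, which is the stated conclusion.
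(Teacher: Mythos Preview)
The paper does not give its own proof of this theorem: it is stated as a quotation from the literature (Theorem~1.2 of \cite{OS} and Theorem~5.2.2 of \cite{Nem3}), with only the parenthetical remark that the Heegaard Floer statement transfers to monopole Floer homology via \cite{KMOS} and the isomorphism of \cite{KLT}, \cite{CGH}, \cite{Taub}. There is therefore nothing in the paper to compare your proposal against beyond the bare citation.

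That said, your sketch is an accurate summary of how the result is actually established in those references: the adjunction-relations module $\mathbb{H}^+(\Gamma,[k])$ and its identification with $\HM(-Y,s)$ by an induction using surgery triangles is exactly the content of \cite{OS}; the reinterpretation as $\bigoplus_n H^0(S_n;\mathbb{F})$ via the weight function $w_0$ and the almost-rational reduction to a one-variable $\tau$ (graded root), which yields the vanishing of $\mathbb{H}^q$ for $q>0$, is exactly \cite{Nem3}. One small notational slip: in the adjunction relations the shift is by $2v$ with $v\in L_\Gamma$ a vertex class (the orbit is $k+2L_\Gamma$), not by $2v^*$; this does not affect the argument. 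Since the paper is content to cite the result, your outline goes beyond what is required here, but it is correct as a summary of the cited proofs.
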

\noindent
(Theorem \ref{prop3} as stated in \cite{OS} and \cite{Nem3} deals with Heegaard Floer homology; see \cite{KMOS} for results concerning the monopole Floer homology directly.) \\
\\
Again, this is not largest possible class of manifolds for which the isomorphism is true; for a more general class of manifolds, see \cite{Nem3}. We will often keep the isomorphism of Theorem \ref{prop3} implicit and abuse notation by referring to elements of the lattice cohomology as lying in the monopole Floer homology, and vice-versa. In these cases, we will sometimes also suppress the grading shift by $\sigma$ and describe elements of the monopole Floer homology as having their lattice cohomology grading. Note that when computing the lattice cohomology, any representative of $[k]$ may be chosen; it is easily seen that choosing a different representative has the effect of shifting the grading, which is cancelled out by the change in $\sigma(\Gamma, k)$.
\end{subsection}
\end{section}

\begin{section}*{Acknowledgements}
\noindent
The author would like to thank his advisor, Zolt\'an Szab\'o, for suggesting the current project and for his continued support and guidance. The author would also like to thank the referee for several helpful comments and suggestions. This work was carried out with the support of NSF grant number DGE 1148900.
\end{section}

\begin{section}{Pin(2)-Equivariant Monopole Homology of Plumbed 3-Manifolds}\label{sec2}
\noindent
We now turn to the computation of Pin(2)-equivariant monopole homology for the class of plumbed 3-manifolds considered in \cite{OS}. The key step will be to observe that the composition of maps $\iota_* \circ \pi_*$ in the Gysin sequence (\ref{eq1}) can be identified with an obvious $\mathbb{Z}/2\mathbb{Z}$-symmetry in the lattice cohomology $\mathbb{H}^0(\Gamma, k)$. It turns out that understanding this involution provides a sufficient algebraic constraint in the Gysin sequence to completely determine the Pin(2)-homology. We thus begin by defining this extra symmetry and establishing some of its properties. Throughout, we assume that we are in the situation of Theorem \ref{prop3}, so that $Y$ is given by plumbing on a connected, negative-definite graph $\Gamma$ with at most one bad vertex. In addition, let $s$ be a self-conjugate spin$^c$ structure on $Y$ and let $k$ be a characteristic vector on $\Gamma$ whose corresponding spin$^c$ structure on $W(\Gamma)$ limits to $s$.

\begin{subsection}{The $J$ structure of Lattice Cohomology}
Let $J$ be the map on $L_\Gamma$ given by reflection through the point $-k/2$; i.e.,
\[
Jx = -x - k.
\] 
Note that since $s$ is self-conjugate, this indeed takes $L_\Gamma$ to itself, and it is straightforward to check that the weight function $w$ is invariant under the action of $J$. Hence $J$ preserves each sublevel set $S_n$ and induces a map on each cohomology group $H^i(S_n, \mathbb{F})$. This defines a $U$-equivariant involution on the entire lattice cohomology, which we also denote by $J$. Observing that $1 + J$  squares to zero, we can then take the homology of $\mathbb{H}^0(\Gamma, k)$ with respect to $1 + J$. For reasons that will become clear in Section \ref{sec3}, we refer to this homology as the \textit{derived lattice cohomology}, and denote it by $\mathbb{H}'(\Gamma, k)$. \\
\\
The derived lattice cohomology is easily described explicitly in terms of a particular basis of $\mathbb{H}^0(\Gamma, k)$, as follows. Observe that the connected components of each sublevel set $S_n$ provide a preferred basis for $\mathbb{H}^0(\Gamma, k)$, the members of which may be further subdivided into two types. First, there are the connected components of $S_n$ which are taken to themselves under the action of $J$; we label these basis elements by $F_i$ and denote their span by $F$. Second, there are the connected components that occur in pairs $E_i$ and $JE_i$; we denote the span of $\{E_i\}$ and $\{JE_i\}$ by $E$. We thus obtain the decomposition
\[
\mathbb{H}^0(\Gamma, k) = E \oplus F = \spa \{E_i\} \oplus \spa \{(1+J)E_i\} \oplus F.
\]
Note that the above splitting does not respect the $U$-action - while it is easily seen that $U$ maps the span of $\{(1+J) E_i\}$ into itself, the image of $F$ under $U$ in general lies in the subspace $\spa \{(1+J) E_i\} \oplus F$. Observing that $\ker (1 + J) = \spa \{(1+J) E_i\} \oplus F$ and $\im (1 + J) = \spa \{(1+J) E_i\}$, we evidently have the isomorphism of graded $\mathbb{F}[U]$-modules
\[
\mathbb{H}'(\Gamma, k) \cong (\spa \{(1+J)E_i\} \oplus F) / \spa \{(1+J)E_i\},
\] 
where we have written the right-hand side as a quotient to emphasize the $U$-module structure. As an abelian group, of course, $\mathbb{H}'(\Gamma, k)$ may be identified with $F$. \\
\\
We now prove a preliminary result on the structure of the derived lattice cohomology. 

\begin{lem}\label{prop4}
Let $(Y, s)$ and $(\Gamma, k)$ be as above. Then the derived lattice cohomology $\mathbb{H}'(\Gamma, k)$ is isomorphic as an $\mathbb{F}[U]$-module to $\mathcal{U}_r^+$ for some $r$.
\end{lem}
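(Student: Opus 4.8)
The plan is to determine the $\mathbb{F}[U]$-module structure of $\mathbb{H}'(\Gamma,k)$ directly from the action of $U$ on the preferred basis, the decisive combinatorial input being that each sublevel set carries at most one $J$-invariant connected component.

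First I would record the $U$-action on $\mathbb{H}'(\Gamma,k)$ explicitly. Since a cube of $S_n$ has all of its edges in $S_n$, the set $\pi_0(S_n)$ coincides with $\pi_0$ of the $1$-skeleton of $S_n$, so $\mathbb{H}^0(\Gamma,k)=\bigoplus_n H^0(S_n,\mathbb{F})$ has as a basis the indicator functions $\mathbf{1}_C$ of the connected components $C$ of the various $S_n$, with $U\mathbf{1}_D=\sum_{C\subseteq D}\mathbf{1}_C$ (the sum over components $C$ of $S_{n-1}$ contained in the component $D$ of $S_n$) and $J\mathbf{1}_C=\mathbf{1}_{JC}$. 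Passing to $\mathbb{H}'(\Gamma,k)=\ker(1+J)/\operatorname{im}(1+J)$, its degree-$2n$ part is spanned by the classes $[\mathbf{1}_C]$ of the $J$-invariant components $C$ of $S_n$, and — because the non-invariant subcomponents of a $J$-invariant $D$ occur in $J$-pairs and hence contribute elements of $\operatorname{im}(1+J)$ — one gets $U[\mathbf{1}_D]=\sum[\mathbf{1}_C]$, summed over the $J$-invariant components $C$ of $S_{n-1}$ lying inside $D$. Note that each such $C$ lies in a unique component of $S_n$, which is automatically $J$-invariant (it is the unique component of $S_n$ containing $JC=C$), so these sums define a map $\Phi_n$ from $J$-invariant components of $S_{n-1}$ to those of $S_n$.

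The crux — and the step I expect to be the main obstacle — is the combinatorial claim that \emph{for every $n$ the sublevel set $S_n$ contains at most one $J$-invariant connected component}. The idea is to exploit convexity: $w_0(x)=-((x,x)+(x,k))/2$ is the restriction to $L_\Gamma$ of a convex function on $L_\Gamma\otimes\mathbb{R}$ (convex because the intersection form is negative definite) which is invariant under the reflection $J$ about its minimum $x_0=-k/2$; hence $\{w_0\leq n\}$ is a convex set containing $x_0$, and so contains the smallest lattice cube $\Box_0$ around $x_0$. If $C$ is a $J$-invariant component of $S_n$, then for any lattice point $p\in C$ we also have $Jp\in C$, and by convexity every lattice point of the solid region $\operatorname{conv}(C\cup\{x_0\})$ has weight $\leq n$; consequently all lattice cubes inside $\operatorname{conv}(C\cup\{x_0\})$ belong to $S_n$, and one deduces that $C$ is joined inside $S_n$ to $\Box_0$, i.e.\ $C\supseteq\Box_0$. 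Since $\Box_0$ is common to every $J$-invariant component of $S_n$, there can be only one. The point requiring real care here is the passage from the convex body $\{w_0\leq n\}$ to the cubical complex $S_n$ — verifying that a $J$-invariant component genuinely reaches $\Box_0$ within $S_n$ rather than merely being ``surrounded'' by the convex hull — which is essentially a piece of N\'emethi's analysis of these sublevel sets, and could also be obtained from his reduction of the lattice cohomology of an almost-rational graph to that of a graded root.

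Granting the claim, the lemma follows readily. It gives $\dim_{\mathbb{F}}\mathbb{H}'_{2n}\leq 1$ for all $n$, so $\mathbb{H}'(\Gamma,k)$ — being concentrated in even gradings — contains at most one infinite tower, and it makes each $\Phi_n$ trivially injective, so $U\colon\mathbb{H}'_{2n}\to\mathbb{H}'_{2n-2}$ is surjective for every $n$ and $\mathbb{H}'(\Gamma,k)$ has no finite $\mathbb{F}[U]$-summand. Finally, by Theorem \ref{prop3} the module $\mathbb{H}^0(\Gamma,k)$ has one-dimensional graded pieces in all sufficiently high gradings, where $J$ must act as the identity and hence $\mathbb{H}'=\mathbb{H}^0$; in particular $\mathbb{H}'(\Gamma,k)$ is infinite-dimensional and so does contain a tower. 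Combining these facts identifies $\mathbb{H}'(\Gamma,k)$ with $\mathcal{U}_r^+$, where $r$ is the lowest grading in which it is nonzero.
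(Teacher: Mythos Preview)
Your overall strategy is sound: the lemma reduces to the claim that each sublevel set $S_n$ has at most one $J$-invariant connected component, after which the $\mathbb{F}[U]$-module structure follows exactly as you describe (the injectivity of $\Phi_n$ gives surjectivity of $U$, and the one-dimensionality in high gradings supplies the tower). That part of your argument is correct and matches the paper.

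The issue is with your proof of the uniqueness claim. Your convexity argument, as written, does not close: knowing that every lattice point and every lattice cube inside $\operatorname{conv}(C\cup\{x_0\})$ lies in $S_n$ does not imply that these cubes form a \emph{connected} cubical subcomplex joining $C$ to $\Box_0$. A thin convex region can contain isolated lattice points with no lattice edges between them, and there is no reason $\Box_0$ itself sits inside $\operatorname{conv}(C\cup\{x_0\})$. You flag this yourself and defer to N\'emethi's graded-root reduction; that would indeed settle the matter, but it is heavy and somewhat circular in spirit, since that reduction already encodes much of Theorem~\ref{prop3}.

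The paper's argument for this step is quite different and avoids the problem entirely. It applies the Lefschetz fixed-point theorem: a $J$-invariant component $X$ of $S_n$ has vanishing higher cohomology by Theorem~\ref{prop3}(1) (which holds over $\mathbb{Z}$), so the Lefschetz number of $J|_X$ is $1$ and $J$ must fix a point of $X$. But $J$ is reflection through $-k/2$, whose unique fixed point in $\mathbb{R}^{|\Gamma|}$ is $-k/2$; hence every $J$-invariant component contains $-k/2$ in its geometric realization, and there is at most one. This is a two-line replacement for your convexity discussion, and it is worth internalizing: the vanishing of higher lattice cohomology in the one-bad-vertex case is exactly the topological input that makes the fixed-point argument go through.
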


\begin{proof}
We begin by showing that in each grading, $F$ is at most one-dimensional. Let $X$ be a $J$-invariant connected component of a sublevel set $S_n$. By Theorem \ref{prop3}, all the higher cohomology groups of $X$ vanish. Hence the Lefschetz fixed-point theorem tells us that the action of $J$ on $X$ has at least one fixed point. (Note that in our setup, we are working over $\mathbb{Z}/2\mathbb{Z}$-coefficients, but Theorem \ref{prop3} actually holds for $\mathbb{Z}$-coefficients.) On the other hand, $J$ is geometrically given by reflection through $-k/2$, so the only possible fixed point is $-k/2$. Thus $X$ is uniquely specified as connected component of $S_n$ by the condition that it contains $-k/2$, showing that $F$ is at most one-dimensional in each grading $2n$. \\
\\
We now observe that if $F$ is zero in a particular grading, then it must be zero in all lower gradings. Indeed, if there are no $J$-invariant connected components in grading $2n$, then there cannot be any $J$-invariant connected components in gradings $2m \leq 2n$, since the sublevel sets $S_m$ are subsets of the sublevel set $S_n$. Moreover, it is clear from the geometric picture that if $F$ is nonzero in gradings $2n$ and $2n-2$, then the action of $U$ must map the nonzero element of $F$ in grading $2n$ to the nonzero element of $F$ in grading $2n-2$, plus possibly some elements of $\spa \{(1 + J)E_i\}$. This establishes the $U$-module structure and proves the claim.
\end{proof}
\noindent
Note that the fact that $F$ is zero- or one-dimensional implies the following structure result for the monopole Floer homology:
\begin{cor}\label{prop5}
Let $(Y, s)$ and $(\Gamma, k)$ be as above. Then there exists a unique grading $\rho = \rho(Y, s)$ such that in gradings $\rho + 2n$ for $n \geq 0$, the monopole Floer homology of $(-Y, s)$ has odd rank, and in gradings $\rho + 2n$ for $n < 0$, the monopole Floer homology of $(-Y, s)$ has even rank.
\end{cor}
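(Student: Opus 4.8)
The plan is to read off the rank of $\HM_j(-Y, s)$ in each grading directly from the decomposition $\mathbb{H}^0(\Gamma, k) = \spa\{E_i\} \oplus \spa\{(1+J)E_i\} \oplus F$ established above, using Theorem \ref{prop3} to pass between the lattice cohomology and the monopole Floer homology. By parts (1)--(3) of that theorem, $\HM(-Y, s) \cong \mathbb{H}^0(\Gamma, k)[\sigma]$ is finite-dimensional in each grading and is supported in a single parity class $\sigma + 2\mathbb{Z}$, so it suffices to compute the dimension of each graded piece of $\mathbb{H}^0(\Gamma, k)$ and then translate everything by the shift $\sigma$.

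First I would handle the $E$-summand. Since $\{E_i\} \cup \{JE_i\}$ is a basis for $E := \spa\{E_i\} \oplus \spa\{(1+J)E_i\}$ and $J$ is grading-preserving and interchanges $E_i \leftrightarrow JE_i$ with $JE_i \neq E_i$, the basis elements of $E$ lying in any fixed grading come in pairs; hence $\dim E_j$ is even for every $j$. Next I would invoke Lemma \ref{prop4}: the proof there shows $F$ is at most one-dimensional in each grading, that $F$ vanishes in grading $2m$ whenever it vanishes in some grading $2n \geq 2m$, and that $F$ (as a graded abelian group) is identified with $\mathbb{H}'(\Gamma, k) \cong \mathcal{U}_r^+$. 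Consequently $\dim F_j = 1$ exactly when $j \in \{r, r+2, r+4, \ldots\}$ and $\dim F_j = 0$ otherwise; in particular $F$ is nonzero in all sufficiently large gradings, so this set is nonempty.

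Combining these, put $\rho := r + \sigma$; then for $j = \rho + 2n$ we get $\operatorname{rank} \HM_{\rho + 2n}(-Y, s) = \dim E_{r+2n} + \dim F_{r+2n}$, which equals $\text{even} + 1 = \text{odd}$ for $n \geq 0$ and $\text{even} + 0 = \text{even}$ for $n < 0$. This establishes existence. For uniqueness, observe that $\rho$ is forced to equal the smallest grading in which $\HM(-Y, s)$ has odd rank — this minimum is attained precisely because the ranks are odd throughout $\{\rho + 2n : n \geq 0\}$ and even below it — so any two valid choices of $\rho$ must coincide.

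The argument is essentially bookkeeping once Lemma \ref{prop4} is in hand; the only points requiring genuine care are the even-rank claim for the $E$-part (which rests on $J$ pairing up the preferred basis elements \emph{within} each grading) and the translation by the grading shift $\sigma$ relating lattice cohomology gradings to monopole Floer gradings. I do not expect any substantive obstacle beyond these routine verifications.
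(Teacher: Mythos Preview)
Your proof is correct and follows essentially the same approach as the paper's, which simply observes that $E$ is even-dimensional in each grading and sets $\rho = r + \sigma$. Your version is more detailed---you spell out why the $E$-basis pairs up within each grading, extract the structure of $F$ from Lemma~\ref{prop4}, and explicitly verify uniqueness---but the underlying argument is identical.
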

\begin{proof}
The corollary immediately follows from the fact that $E$ is even-dimensional in each grading. The rational number $\rho$ is of course given by $r + \sigma$, where $r$ is as in Lemma \ref{prop4} and $\sigma$ is as in Theorem \ref{prop3}.
\end{proof}
\noindent
In Section $\ref{sec3}$, we will in fact show that this ``parity invariant" $\rho = \rho(Y, s)$ is equal to twice the Neumann-Siebenmann invariant $\bar{\mu}(Y, s)$. At the moment, however, we will restrict our attention to its relation with the Pin(2)-equivariant monopole Floer homology of $(-Y, s)$. \\
\\
We are now in a position to state our main theorem(s). In order to describe the Pin(2)-homology as an $\mathcal{R}$-module, it will be convenient for us to break it up into four $\mathbb{F}[V]$-submodules. To this end, we establish the following notation. Let $A$ and $B$ be two $\mathbb{Z}$-graded $\mathbb{F}[V]$-modules with graded parts $A_q$ and $B_q$, and let $n \in \mathbb{Z}/4\mathbb{Z}$. We write $A =_{[n]} B$ if the submodules 
\[
A_{[n]} = \bigoplus_{q = n \text{ mod } 4}A_q
\text{ \ \ \ \ \ and \ \ \ \ \ }
B_{[n]} = \bigoplus_{q = n \text{ mod } 4}B_q
\]
consisting of the sums of the groups in gradings congruent to $n$ modulo $4$ are isomorphic as graded $\mathbb{F}[V]$-modules. We refer to the above $\mathbb{F}[V]$-submodules as the $[n]$-\textit{submodules} of $A$ and $B$. Note that if $A$ is in fact a $\mathcal{R}$-module, then in addition we have an action of $Q$ which maps $A_{[n]}$ into $A_{[n-1]}$ for each $n$. \\
\\
We now make a precise re-formulation of Theorem \ref{prop1}. The explicit statement is rather cumbersome, but the rationale behind the casework will become clear after we embark upon the proof, which is given in the next subsection. 

\begin{thm}\label{prop6}
Let $s$ be a spin$^c$ structure on $Y$, and let $k$ be any characteristic vector on $\Gamma$ whose corresponding spin$^c$ structure on $W(\Gamma)$ limits to $s$. Let $r$ be as in Lemma \ref{prop4} and $\sigma$ be as in Theorem \ref{prop3}. Then we have the following set of $\mathbb{F}[V]$-module isomorphisms:
\begin{align*}
\HS(-Y, s)[-\sigma] &=_{[r+3]} 0, \\
\HS(-Y, s)[-\sigma] &=_{[r+2]} \ker(1+J), \\
\HS(-Y, s)[-\sigma] &=_{[r+1]} \mathbb{H}'(\Gamma, k)[1], \text{ and }\\
\HS(-Y, s)[-\sigma] &=_{[r]} \mathbb{H}^0(\Gamma, k)/\im (1+J),
\end{align*}
where $V$ acts on all groups on the right-hand side by $U^2$. The action of $Q$ from the $[r + 2]$- to the $[r + 1]$-submodule may be identified with the quotient map 
\[
\ker(1 + J) \rightarrow \mathbb{H}'(\Gamma, k) = \ker(1 + J)/\im(1 + J),
\] 
followed with multiplication by $U$ in $\mathbb{H}'(\Gamma, k)$. The action of $Q$ from the $[r+1]$- to the $[r]$-submodule may be identified with the inclusion map
\[
\mathbb{H}'(\Gamma, k) = \ker(1 + J)/\im(1 + J) \rightarrow \mathbb{H}^0(\Gamma, k)/\im (1+J).
\]
Multiplication by $Q$ is zero in all other cases.
\end{thm}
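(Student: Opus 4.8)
The plan is to feed the structural information we have accumulated — the $J$-involution on $\mathbb{H}^0(\Gamma,k)$, the identification of $\iota_*\circ\pi_*$ with $1+J$, Lemma \ref{prop4}, and the vanishing of $\mathbb{H}^{>0}$ from Theorem \ref{prop3} — into the Gysin sequence (\ref{eq1}) and simply chase it, being careful about the $\mathbb{Z}/4\mathbb{Z}$-grading bookkeeping that produces the four cases. First I would record that, after applying the grading shift $[-\sigma]$, Theorem \ref{prop3} makes $\HM(-Y,s)$ supported entirely in even gradings and identifies it with $\mathbb{H}^0(\Gamma,k)$ as a graded $\mathbb{F}[U]$-module, with $U$ acting as $V$. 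The composite $\pi_*\circ\iota_*$ in (\ref{eq1}) is multiplication by $Q$ on $\HS$, so the exactness of the Gysin sequence says $\HS$ is built, grading by grading, from $\ker(\cdot Q)$ and $\operatorname{coker}(\cdot Q)$ inside $\HM$; the point of identifying $\iota_*\circ\pi_*$ with $1+J$ (done just before this theorem) is that it pins down $\cdot Q:\HM\to\HS\to\HM$ well enough to let us resolve the extension.

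Concretely, I would split (\ref{eq1}) into short exact sequences
\[
0 \longrightarrow \operatorname{coker}\bigl(\iota_*\circ\pi_*\bigr) \longrightarrow \HS(-Y,s)\xrightarrow{\ \iota_*\ } \ker\bigl(\iota_*\circ\pi_*\bigr)\longrightarrow 0
\]
one for each pair of adjacent gradings, where $\iota_*\circ\pi_* = 1+J$ acting on $\HM(-Y,s)[-\sigma]\cong\mathbb{H}^0(\Gamma,k)$. Since $\HM$ is concentrated in gradings of one parity (relative to $\sigma$), the map $\pi_*$ from $\HM$ in grading $2n$ lands in the $\HS$-grading $2n$, while $\iota_*$ into $\HM$ raises the relevant index by the appropriate amount; tracking which residue class mod $4$ each piece occupies is exactly what sorts $\ker(1+J)$ and $\mathbb{H}^0(\Gamma,k)/\operatorname{im}(1+J)$ into the $[r+2]$ and $[r]$ slots and forces the two remaining residue classes to come from $\mathbb{H}'(\Gamma,k)$ — the cohomology of $1+J$ — which by Lemma \ref{prop4} is a single tower $\mathcal{U}_r^+$. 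The parity invariant $\rho = r+\sigma$ from Corollary \ref{prop5} is what anchors the mod-$4$ shifts, which is why $r$ (equivalently $\rho$) appears in the statement. The $[r+3]$ slot is forced to vanish because there is no surviving contribution there once $\HM$ is all in one parity and the $J$-homology has been split off; I would verify this by a direct grading count in the long exact sequence.

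For the two $Q$-actions: the relevant $\cdot Q:\HS\to\HS$ in the Gysin sequence factors as $\iota_*$ followed by $\pi_*$, so on the $[r+2]$-piece (which $\iota_*$ embeds as $\ker(1+J)\subseteq\mathbb{H}^0$) multiplication by $Q$ is the composite "include into $\HM$, then apply $\pi_*$"; since $\pi_*$ has kernel $\operatorname{im}(\iota_*)=\operatorname{im}(1+J)$ in the appropriate grading and $\HS$ in the next relevant grading was identified with $\mathbb{H}'(\Gamma,k)[1]$, this composite becomes precisely the quotient $\ker(1+J)\to\ker(1+J)/\operatorname{im}(1+J)=\mathbb{H}'(\Gamma,k)$ followed by the grading-$1$ degree shift, i.e.\ multiplication by $U$ in $\mathbb{H}'$. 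Similarly, $Q$ from the $[r+1]$- to the $[r]$-piece is $\pi_*$ out of the copy of $\mathbb{H}^0$ lying under the $[r]$-slot composed with $\iota_*$ out of $\mathbb{H}'[1]$, which unwinds to the natural inclusion $\ker(1+J)/\operatorname{im}(1+J)\hookrightarrow\mathbb{H}^0(\Gamma,k)/\operatorname{im}(1+J)$. That $Q$ vanishes in the remaining cases follows because the source and target then lie in $\HM$-supported gradings of the wrong parity, so the relevant arrow in (\ref{eq1}) is zero.

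The main obstacle I anticipate is not the exactness argument but the extension problem: the short exact sequences above need not split, and to conclude the clean $\mathbb{F}[V]$-module descriptions in the statement I must show the $\mathbb{F}[V]$-module structure is forced. Here the leverage is again $1+J$ together with the explicit basis $\mathbb{H}^0 = \operatorname{span}\{E_i\}\oplus\operatorname{span}\{(1+J)E_i\}\oplus F$ from the previous subsection, the observation (made there) that $U$ preserves $\operatorname{span}\{(1+J)E_i\}$, and the "single tower" conclusion of Lemma \ref{prop4}; these constrain how $V=U^2$ can act across the extension enough to rule out any nontrivial gluing in the gradings where it would matter. The bookkeeping of which basis vectors sit in which residue class mod $4$ — governed by the parity of $w_0$ and hence by $\rho$ — is the delicate part, and I would carry it out by fixing the grading of the bottom of the tower $\mathcal{U}_r^+$ and propagating.
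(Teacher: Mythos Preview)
Your high-level plan --- feed the identification $\iota_*\circ\pi_* = 1+J$ into the Gysin sequence, exploit that $\HM$ lives in even gradings only, and sort the result by residues mod $4$ --- is the same as the paper's. But the execution has two genuine errors that would prevent the argument from going through as written.

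First, the claim that $\pi_*\circ\iota_*$ equals multiplication by $Q$ is false: in any long exact sequence consecutive maps compose to zero, so $\pi_*\circ\iota_* = 0$. This is not a typo you can silently fix, because you use it later to describe the $Q$-action (``$\cdot Q$ factors as $\iota_*$ followed by $\pi_*$''). In fact $Q$ does not factor through $\HM$ at all; its image is exactly $\ker\iota_*$ and its kernel is exactly $\im\pi_*$, and you must work with those. Second, the short exact sequence
\[
0 \to \coker(1+J) \to \HS \xrightarrow{\iota_*} \ker(1+J) \to 0
\]
is not correct. The image of $\iota_*$ is $\ker\pi_*$, which is contained in $\ker(\iota_*\circ\pi_*)=\ker(1+J)$ but is in general strictly smaller: when $F$ is nonzero and $\pi_*F \neq 0$, the element $F$ lies in $\ker(1+J)$ but not in $\im\iota_*$. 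A dimension count already rules your sequence out: if $\dim\HM_q = 2n+\epsilon$ with $\epsilon\in\{0,1\}$, then $\dim\ker(1+J)=\dim\coker(1+J)=n+\epsilon$, so your sequence would give $\dim\HS_q = 2n+2\epsilon$, whereas the correct answer is $n+\epsilon$.

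What the paper actually does, and what your outline is missing, is a hands-on decomposition of the Gysin sequence together with the constraint $Q^3=0$. Concretely: set $E' = \pi_*(\spa\{E_i\})$ and observe that $E$ and $E'$ form $n$ copies of the subcomplex $0\to\mathbb{F}\to\mathbb{F}\oplus\mathbb{F}\to\mathbb{F}\to 0$. For the remaining $F$-part one must do a case split on whether $\pi_*F$ vanishes, producing two different three-row subcomplexes (the paper's displays (\ref{eq3}) and (\ref{eq4})). The crucial step --- the one your proposal does not mention --- is then a diagram chase using $Q^3=0$ to show that no further elements of $\HS$ can exist in grading $q$ (or in the adjacent odd gradings). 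Without $Q^3=0$ the Gysin sequence and the knowledge of $1+J$ do not pin down $\HS$; this relation is precisely what forces the $[r+3]$-submodule to vanish and collapses the tower into the stacked copies of $\mathcal{I}_2$. Once that decomposition is established, the $\mathbb{F}[V]$-module identifications follow because $\iota_*$ and $\pi_*$ are themselves $\mathbb{F}[V]$-module maps, so in the $[r+2]$-slot $\iota_*$ is an isomorphism onto $\ker(1+J)$ and in the $[r]$-slot $\pi_*$ exhibits $\HS$ as $\HM/\im(1+J)$; no separate extension argument is needed.
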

\noindent
\\
We now use Theorem \ref{prop6} to provide a precise re-formulation of Theorem \ref{prop2}. Note that in the case where $Y$ is expressed as surgery on a graph $\Gamma$ for which Theorem \ref{prop3} applies, the Fr\o yshov invariant $\delta(-Y, s)$ may also be defined as half of the grading below which the monopole Floer homology itself vanishes. This follows from the fact that the action of $U$ on the lattice cohomology is nonzero in each grading in which it is possible to be nonzero. \\

\begin{thm}\label{prop7}
Let $s$ be a spin$^c$ structure on $Y$, and let $k$ be any characteristic vector on $\Gamma$ whose corresponding spin$^c$ structure on $W(\Gamma)$ limits to $s$. Then the Manolescu correction terms for $(-Y, s)$ are given by: 
\begin{align*}
a &= 2\alpha(-Y, s) = \rho,\\
b &= 2\beta(-Y, s) + 1 = \rho + 1, \text{ and }\\
c &= 2\gamma(-Y, s) + 2 = 
\begin{cases}
2\delta(-Y, s) + 2 &\quad\text{if } 2\delta(-Y, s) = \rho \mod 4 \\
2\delta(-Y, s) &\quad\text{if } 2\delta(-Y, s) = \rho + 2 \mod 4. \ 
\end{cases}
\end{align*}
Moreover, the ranks of the Pin(2)-homology are as follows. In odd gradings (congruent to $\rho+ 1 \textnormal{ mod } 2$), the Pin(2)-homology is composed of the single $V$-tower $\mathcal{V}_b^+$. In even gradings (congruent to $\rho \textnormal{ mod } 2$), the rank of the Pin(2)-homology is half of the rank of the usual monopole Floer homology, rounded up.
\end{thm}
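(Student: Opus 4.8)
The plan is to read off all of the stated invariants directly from the four-part description of $\HS(-Y,s)[-\sigma]$ in Theorem \ref{prop6}, combined with Lemma \ref{prop4} (which gives $\mathbb{H}'(\Gamma,k)\cong\mathcal{U}_r^+$) and the structure theorem \ref{prop3} for $\HM(-Y,s)$. I would work throughout with the $[-\sigma]$-shifted (``lattice'') gradings, adding back $\sigma$ only at the end. Recall that $r$ is even and that $\mathbb{H}^0(\Gamma,k)\cong\HM(-Y,s)[-\sigma]$ is supported in even gradings; write $g_0$ for its bottom grading, so that $g_0=2\delta(-Y,s)-\sigma$ by the description of the Fr\o yshov invariant recalled above.

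I would treat first the odd gradings and the ranks. By Theorem \ref{prop6} the $[r+3]$-submodule vanishes and the $[r+1]$-submodule is $\mathbb{H}'(\Gamma,k)[1]\cong\mathcal{U}^+_{r+1}$ cut down to the gradings congruent to $r+1$ modulo $4$, which as an $\mathbb{F}[V]$-module ($V$ acting as $U^2$) is the single tower $\mathcal{V}^+_{r+1}$. Hence the odd part of $\HS(-Y,s)[-\sigma]$ is one $V$-tower with bottom in grading $r+1$, so after unshifting it is the $b$-tower $\mathcal{V}^+_{\rho+1}$ and $b=\rho+1$. Applying $Q\colon[r+1]\to[r]$, which Theorem \ref{prop6} identifies with the injective inclusion $\mathbb{H}'(\Gamma,k)=\ker(1+J)/\im(1+J)\hookrightarrow\mathbb{H}^0(\Gamma,k)/\im(1+J)$, carries this tower isomorphically (with a grading shift of $-1$) onto a copy of $\mathcal{V}^+_r$ inside the $[r]$-submodule; since $Q$ carries the $b$-tower onto the $a$-tower, this copy is the $a$-tower and $a=\rho$. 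For the ranks, a rank-nullity count for $1+J$ acting on $(\mathbb{H}^0)_{2n}$, using the basis descriptions $\ker(1+J)=\spa\{(1+J)E_i\}\oplus F$ and $\im(1+J)=\spa\{(1+J)E_i\}$ together with the fact (Lemma \ref{prop4}) that $F$ is at most one-dimensional in each grading, shows $\dim\ker(1+J)_{2n}=\dim(\mathbb{H}^0/\im(1+J))_{2n}=\lceil\dim(\mathbb{H}^0)_{2n}/2\rceil$. Since the $[r]$- and $[r+2]$-submodules together exhaust the even gradings and $\dim(\mathbb{H}^0)_{2n}=\operatorname{rank}\HM_{2n+\sigma}(-Y,s)$, this is the ``half the rank, rounded up'' claim.

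The remaining work, and the step I expect to be the main obstacle, is to locate the $c$-tower. By Theorem \ref{prop6} the only nonzero $Q$-map out of an even-graded submodule is $Q\colon[r+2]\to[r+1]$, and $Q$ carries the $c$-tower onto the infinite $b$-tower; since the $c$-tower is supported in one residue class modulo $4$, it therefore lies entirely in the $[r+2]$-submodule $\ker(1+J)|_{[r+2]}$, and I must identify the unique infinite $V$-tower there. The key point is that the canonical $U$-tower $\mathcal{U}^+_{g_0}\subseteq\mathbb{H}^0(\Gamma,k)$ is contained in $\ker(1+J)$, i.e.\ that $J$ acts as the identity on it: indeed $J$ is a $U$-equivariant involution, hence preserves the set of infinitely $U$-divisible elements of $\mathbb{H}^0(\Gamma,k)$, which is exactly $\mathcal{U}^+_{g_0}$; since $\mathbb{H}^0(\Gamma,k)$ is one-dimensional in all sufficiently high gradings, $1+J$ annihilates every element of $\mathcal{U}^+_{g_0}$ of high enough grading, and because $1+J$ is $U$-equivariant and $\mathcal{U}^+_{g_0}$ is $U$-torsion-free it must annihilate all of $\mathcal{U}^+_{g_0}$. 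Granting this, $\mathcal{U}^+_{g_0}$ splits off $\ker(1+J)$ (being injective over $\mathbb{F}[U]$), so $\ker(1+J)\cong\mathcal{U}^+_{g_0}\oplus(\text{finite})$ and its infinite tower restricted to gradings congruent to $r+2$ modulo $4$ is $\mathcal{V}^+_{c'}$ with $c'$ the smallest integer $\ge g_0$ in that residue class, namely $c'=g_0$ if $g_0\equiv r+2$ and $c'=g_0+2$ if $g_0\equiv r$ modulo $4$. Adding $\sigma$ and substituting $g_0=2\delta(-Y,s)-\sigma$ and $r=\rho-\sigma$ turns this into the stated case distinction for $c=2\gamma(-Y,s)+2$; the claim that $Q$ vanishes in all remaining cases is immediate from Theorem \ref{prop6}.
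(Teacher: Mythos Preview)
Your proposal is correct and follows essentially the same route as the paper's proof: both arguments read off the correction terms and ranks directly from the four-piece $\mathbb{F}[V]$-module decomposition of Theorem~\ref{prop6} together with Lemma~\ref{prop4}. The only cosmetic differences are that you identify the $b$-tower first and deduce the $a$-tower via the injective $Q$-map $[r+1]\to[r]$, whereas the paper locates the $a$-tower first (using the geometric picture of $U^i$ acting on the high-grading $F$-element) and then passes to $b$; and you spell out in more detail why the canonical $\mathcal{U}^+_{g_0}$ lies in $\ker(1+J)$ and splits off, which the paper compresses into a one-line reference to the characterization of $\delta(-Y,s)$.
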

\begin{proof}
In sufficiently high gradings, the sublevel set $S_n$ is contractible by Corollary 3.2.5 of \cite{Nem3}. This implies that in each of these gradings the lattice cohomology consists of a single nonzero element lying in $F$. Let us consider the action of $U$ on such an element, which we assume to be in grading $2n$. From the geometric picture, we see that multiplication by $U^i$ takes this element to the sum of all the connected components in grading $2(n - i)$. Each $V$-tower may be identified by selecting an element of sufficiently high grading and repeatedly multiplying by $V$. From the fact that the $[r+3]$-submodule is identically zero, we see that the $\mathcal{V}_a^+$-tower must lie in the $[r]$-submodule (this will also be evident from the proof of Theorem \ref{prop6} itself). The above description of the $U$-action then identifies the $\mathcal{V}_a^+$-tower with the $[r]$-submodule of $\mathbb{H}'(\Gamma, k) \subseteq \mathbb{H}^0(\Gamma, k)/\im (1 + J)$, establishing the first equality. The description of the $Q$-action in Theorem \ref{prop6} shows that multiplication by $Q$ is an isomorphism from the $b$-tower to the $a$-tower, proving the second equality. Finally, the third equality follows from the remark preceding the theorem. The statement about the ranks of the Pin(2)-homology follows immediately from the fact that $\im (1 + J)$ has half the dimension of $E$.
\end{proof}
\noindent
Note that the invariants $\rho$ and $\delta$ can be read off from the ranks of the monopole Floer homology, recovering Theorem \ref{prop2}. 
\end{subsection}

\begin{subsection}{Pin(2)-Homology and the Gysin Sequence}
As mentioned at the beginning of the section, our computation hinges on the simple observation that the action of $J$ on $\mathbb{H}^0(\Gamma, k)$ may be identified with the composition of maps $\iota_* \circ \pi_*$ in the Gysin sequence (\ref{eq1}). This follows easily from the naturality of the isomorphism between monopole Floer homology and lattice cohomology, but since there are several maps that mediate this isomorphism, we sketch the details in the following lemma.

\begin{lem}\label{prop8}
Let $(Y, s)$ and $(\Gamma, k)$ be as above. The composition of maps $\iota_* \circ \pi_*$ in the Gysin sequence for $(- Y, s)$ coincides with the action of $1 + J$ on the lattice cohomology $\mathbb{H}^0(\Gamma, k)$ under the isomorphism of Theorem \ref{prop3}.
\end{lem}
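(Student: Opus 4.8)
The plan is to factor the statement into a formal identity in the Gysin sequence and a naturality check. First I would recall that, by the construction in \cite{Lin1}, the sequence (\ref{eq1}) is the long exact sequence of the residual $\mathbb{Z}/2\mathbb{Z} = \{1, j\}$-symmetry of the Chern-Simons-Dirac setup (the $S^0$-``bundle'' relating the $\mathrm{Pin}(2)$- and $S^1$-equivariant pictures), with $\iota_*$ the transfer map and $\pi_*$ the projection. Exactly as for an honest double cover, the composite $\iota_* \circ \pi_* \colon \HM(-Y,s) \to \HM(-Y,s)$ equals $\mathrm{id} + j_*$, where $j_*$ is the involution induced by charge conjugation $j$ (an honest involution because $s$ is self-conjugate). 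So the real content of the lemma is to show that under the isomorphism of Theorem \ref{prop3} the involution $j_*$ is carried to the map $J$ on $\mathbb{H}^0(\Gamma, k)$.

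Next I would establish this by tracing conjugation through the chain of maps behind Theorem \ref{prop3}. That isomorphism is built from the identification of $\HM$ with Heegaard Floer homology \cite{KLT}, \cite{CGH}, \cite{Taub} --- or, to remain within monopole Floer, the surgery triangles of \cite{KMOS} --- followed by the lattice-cohomology description of $HF^+(-Y,s)$ (resp.\ $\HM(-Y,s)$) from \cite{OS}, \cite{Nem3}. Charge conjugation is natural for the first step: it corresponds to the conjugation symmetry $HF^+(-Y,s) \xrightarrow{\sim} HF^+(-Y,\bar s)$, which for $s = \bar s$ is again an involution. It is natural for the second step because conjugation of $\mathrm{spin}^c$ structures on $W(\Gamma)$ acts on characteristic vectors by $k' \mapsto -k'$, and this permutation commutes with all of the cobordism/triangle maps used (and with the gradings, since $\sigma(\Gamma,k')$ depends on $k'$ only through $(k')^2$). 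Finally, under the parametrization in which $x \in L_\Gamma$ labels the $\mathrm{spin}^c$ structure with characteristic vector $k + 2x$, we have $-(k + 2x) = k + 2(-x - k) = k + 2(Jx)$; since $w$ is $J$-invariant, conjugation is realized on the lattice complex precisely by the involution $J$ of the preceding subsection, and the lemma follows.

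The hard part is the bookkeeping in the second paragraph: one has to verify conjugation-equivariance at each of the several distinct identifications composing Theorem \ref{prop3}, and in particular confirm that the charge-conjugation symmetry on the gauge-theoretic side matches the standard conjugation symmetry on the Heegaard Floer side. This is essentially formal, since conjugation acts compatibly on all the relevant cobordism and surgery-triangle maps, but it is the step that requires care; I would carry it out map-by-map, leaning where possible on the monopole-theoretic surgery triangles of \cite{KMOS} so as to keep the whole argument inside a single Floer theory.
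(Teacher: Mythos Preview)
Your two-step structure---first $\iota_* \circ \pi_* = 1 + j_*$ from Lin's short exact sequence $0 \to C_*^{inv} \to C_* \xrightarrow{1+j} (1+j)C_* \to 0$, then $j_* = J$ under Theorem~\ref{prop3}---is exactly the paper's approach, and your combinatorial check $-(k+2x) = k + 2(Jx)$ is the right endpoint. The one place the paper is more efficient is in your self-identified ``hard part'': rather than tracing conjugation through the $\HM \cong HF^+$ equivalence or through surgery triangles, the paper recalls that the isomorphism of Theorem~\ref{prop3} \emph{is} the map $x \mapsto \phi_x$ with $\phi_x(k') = \HM(W(\Gamma),k')(x)$ defined via monopole cobordism maps, and then the four-dimensional $j$-symmetry of the Seiberg--Witten equations over $(W(\Gamma),k')$ versus $(W(\Gamma),-k')$ yields $\phi_x(k') = \phi_{jx}(-k')$ in a single stroke---so no map-by-map bookkeeping through an intermediate Floer theory is required.
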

\begin{proof}
Denote by $j$ the involution on the chain groups of the usual monopole Floer homology of $(-Y, s)$ coming from the $\mathbb{Z}/2\mathbb{Z}$-symmetry of the Chern-Simons-Dirac functional. We claim that the induced action of $j$ on the monopole homology coincides with the action of $J$ on $\mathbb{H}^0(\Gamma, k)$ under the isomorphism of Theorem \ref{prop3}. For this, we briefly recall the original formulation of $\mathbb{H}^0(\Gamma, k)$; see e.g.\ \cite{OS}. Given any element $x$ in the monopole Floer homology of $(-Y, s)$, let $\phi_x$ be the map from the lattice $[k] = k + 2L_\Gamma$ to $\mathcal{U}_0^+$ defined by 
\[
\phi_x(k') = \HM(W(\Gamma),k')(x)
\]
for any $k' \in [k]$. Here, $\HM(W(\Gamma), k')$ is the usual monopole Floer homology map
\[
\HM(W(\Gamma),k'): \HM(-Y, s) \rightarrow \HM(S^3)
\]
associated to the spin$^c$ structure $k'$ on the cobordism $W(\Gamma)$ between $Y$ and $S^3$, and we have identified the monopole Floer homology of $S^3$ with $\mathcal{U}_0^+$ (see e.g.\ \cite{KM}). According to Proposition 2.4 of \cite{OS}, the map $\phi_x$ satisfies a certain set of ``adjunction equalities" relating $\phi_x(k')$ and $\phi_x(k' + 2v)$ for all $k' \in [k]$ and $v \in L_\Gamma$. We temporarily denote the $\mathbb{F}[U]$-module formed by the class of all such maps from $[k]$ to $\mathcal{U}_0^+$ satisfying these equalities by $\mathbb{H}^0(\Gamma, k)$; this is the original definition of lattice cohomology and our present notion is simply a combinatorial re-formulation of it (see Theorem 3.1.12 of \cite{Nem3}). The main result of \cite{OS} is then that (for the 3-manifolds at hand) the correspondence $x \mapsto \phi_x$ is an $\mathbb{F}[U]$-module isomorphism from from the monopole Floer homology of $(-Y, s)$ onto the lattice cohomology $\mathbb{H}^0(\Gamma, k)$. \\
\\
The cobordism map $\HM(W(\Gamma), k')$ is defined by studying the moduli space of solutions to the four-dimensional Seiberg-Witten equations over the pair $(W(\Gamma), k')$ with some specified limiting behavior at the ends of the cobordism (see e.g.\ \cite{KM}). As in the three-dimensional case, there is a $j$-symmetry of the four-dimensional Seiberg-Witten equations, which (roughly speaking) restricts to the three-dimensional $j$-symmetry at the ends of the cobordism. However, since $k'$ need not be self-conjugate, this symmetry merely identifies the two moduli spaces of the four-dimensional equations corresponding to $(W(\Gamma), k')$ and $(W(\Gamma), -k')$. Points in the former limit to $x$ at one end, while points in the latter limit to $jx$. It follows that we have the equality
\[
\phi_{x}(k') = \phi_{jx}(-k')
\]
for all $x \in \HM(-Y, s)$ and $k' \in [k]$. Thus the correspondence $x \mapsto \phi_x$ takes the involution $j$ on the monopole Floer homology to the map on $\mathbb{H}^0(\Gamma, k)$ given by precomposing with the reflection $k' \mapsto -k'$. One then checks that under the appropriate combinatorial re-formulation, this latter involution coincides with the map $J$ defined at the beginning of the section. \\
\\
We now claim that the map on $\HM(-Y, s)$ given by $1 + j$ is equal to the composition of maps $\iota_* \circ \pi_*$ in the Gysin sequence (\ref{eq1}). To see this, we observe that the Gysin sequence is the homology exact sequence associated to the short exact sequence of chain complexes
\[
0 \xrightarrow{} C_*^{inv} \xrightarrow{i} C_* \xrightarrow{1+j} (1+j)C_* \xrightarrow{} 0.
\]
(See Proposition 3.10 of \cite{Lin1}.) Here, $C_*$ is the chain complex for the usual monopole Floer homology of $(-Y, s)$ and $C_*^{inv}$ is the subcomplex of $j$-invariant chains. The map $i$ is given by inclusion, and the subcomplex $(1+j)C_*$ is easily shown to be quasi-isomorphic to $C_*^{inv}$ via the inclusion of $(1+j)C_*$ in $C_*^{inv}$. Thus $\iota_* \circ \pi_*$ is evidently induced by the map $1 + j$ on $C_*$. There is a slight subtlety that arises from the fact that in the above sequence, the perturbation of the Chern-Simons-Dirac functional is taken to be $j$-equivariant, whereas in the standard definition of monopole Floer homology the perturbation is such that critical points are isolated. By Corollary 3.7 of \cite{Lin1}, however, the monopole Floer homology defined using such ``Morse-Bott" perturbations is canonically isomorphic to the usual monopole Floer homology, and it is easily checked that this isomorphism is $j$-equivariant.
\end{proof}
\noindent
We now proceed with the proof of Theorem \ref{prop6}.  
\begin{proof}
Let $q$ be a fixed grading of the monopole Floer homology of $(-Y, s)$. We begin by determining the Pin(2)-homology in grading $q$ when $q = \rho$ modulo $2$. Consider the splitting of the lattice cohomology in grading $q - \sigma$ given by 
\[
\mathbb{H}_{q - \sigma}^0(\Gamma, k) = \spa \{E_i\}_{i =1}^n \oplus \spa \{(1 + J)E_i\}_{i =1}^n \oplus F,
\]
where $F$ is at most one-dimensional. Under the isomorphism of Theorem \ref{prop3}, the composition $\iota_* \circ \pi_*$ maps the first summand isomorphically onto the second. Let $E' = \pi_* (\spa \{E_i\})$. Then $E$ and $E'$ fit into the subcomplex
\begin{equation}\label{eq2}
0 \rightarrow E' \xrightarrow{\iota_*} E = \spa \{(1 + J)E_i\} \oplus \spa \{E_i\} \xrightarrow{\pi_*} E' \rightarrow 0,
\end{equation}
where $\spa \{E_i\}$ maps isomorphically onto $E'$ via $\pi_*$ and $E'$ maps isomorphically onto the subspace $\spa \{(1 + J)E_i\}$ via $\iota_*$. Note that this subcomplex is simply $n$ copies of the exact sequence
\begin{equation}\label{eq5}
0 \rightarrow \mathbb{F} \rightarrow \mathbb{F} \oplus \mathbb{F} \rightarrow \mathbb{F} \rightarrow 0.
\end{equation}
\noindent
Roughly speaking, the intuition behind this decomposition is that the elements of $E$ consist of irreducible critical points which occur in pairs due to the $j$-symmetry of the Chern-Simons-Dirac functional. In the usual monopole Floer homology, each of these pairs contributes an $(\mathbb{F} \oplus \mathbb{F})$-summand, while in the Pin(2)-homology, each pair contributes a single $\mathbb{F}$-summand. Note that although the choice of $\{E_i\}$ is not canonical, the subspace $E'$ is canonically defined since $\pi_*(E) = \pi_*(\spa \{E_i\})$. \\
\\
Now let us consider $F$, which we assume for the moment to be nonzero in grading $q$. The fact that $(\iota_* \circ \pi_*)F = 0$ does not immediately allow us to determine how $F$ fits into the Gysin sequence. Instead, there are two cases. First, suppose that $\pi_* F \neq 0$. Since $(\iota_* \circ \pi_*)F = 0$, this implies that $\pi_* F$ is in the image of $Q$. Denote $\pi_* F$ by $F'$. Since the monopole Floer homology of $(-Y, s)$ is zero in grading $q + 1$, a simple diagram chase shows that $F$ and $F'$ lie in some subcomplex
\begin{equation}\label{eq3}
\begin{tikzpicture}[baseline=(current bounding box.center), scale=1.5]
\node (00) at (0,0) {$F'$};
\node (10) at (1,0) {$F$};
\node (20) at (2,0) {$F'$};
\node (01) at (0,1) {$\mathbb{F}$};
\node (11) at (1,1) {$0$};
\node (21) at (2,1) {$\mathbb{F}$};
\node (02) at (0,2) {$\mathbb{F}$};
\node (12) at (1,2) {$*$};
\node (22) at (2,2) {$\mathbb{F}$};

\path[->,font=\scriptsize,>=angle 90]
(10) edge node[below]{$\pi_*$} (20)
(21) edge node[above]{$\cdot Q$} (00)
(22) edge node[above]{$\cdot Q$} (01);
\end{tikzpicture}
\end{equation}
\noindent
On the other hand, if $\pi_* F$ is zero, then $\iota_*$ surjects onto $F$, and choosing any preimage $F'$ of $F$ under $\iota_*$, a similar diagram chase then shows that we have the subcomplex
\begin{equation}\label{eq4}
\begin{tikzpicture}[baseline=(current bounding box.center), scale=1.5]
\node (00) at (0,0) {$\mathbb{F}$};
\node (10) at (1,0) {$*$};
\node (20) at (2,0) {$\mathbb{F}$};
\node (01) at (0,1) {$\mathbb{F}$};
\node (11) at (1,1) {$0$};
\node (21) at (2,1) {$\mathbb{F}$};
\node (02) at (0,2) {$F'$};
\node (12) at (1,2) {$F$};
\node (22) at (2,2) {$F'$};

\path[->,font=\scriptsize,>=angle 90]
(21) edge node[above]{$\cdot Q$} (00)
(22) edge node[above]{$\cdot Q$} (01)
(02) edge node[above]{$\iota_*$} (12);
\end{tikzpicture}
\end{equation}
Here, we have used the fact that $F$ does not lie in the image of $1 +J$ to conclude that $F'$ cannot be in the image of $\pi_*$, and thus that the action of $Q$ on $F'$ is nonzero. Note that in this case the subspace $F'$ is not canonically determined but must be chosen as a preimage of $F$. \\
\\
We now claim that the Pin(2)-homology in grading $q$ is precisely equal to $E' \oplus F'$ in either of the two cases described above. Indeed, suppose we had an element $x$ of the Pin(2)-homology in grading $q$ lying outside of $E' \oplus F'$. We claim that without loss of generality we may assume $\iota_*x = 0$. Indeed, suppose that $\iota_*x \neq 0$. Since $(\iota_* \circ \pi_*)(\iota_* x) = 0$, we see that $\iota_*x$ must lie in $\ker (1 + J) = \spa \{(1 + J)E_i\} \oplus F$. By subtracting off elements of $E'$ from $x$, we may thus assume that $\iota_*x$ lies in $F$. If $\iota_*x$ is still nonzero, then $\pi_* F$ must be zero, and we are in the second case above where $F = \iota_* F'$. Subtracting off the nonzero element of $F'$ from $x$, we obtain an element lying outside of $E' \oplus F'$ such that $\iota_* x = 0$. \\
\\
Since the image of $\pi_*$ certainly lies in $E' \oplus F'$, we know that $x$ is not in the image of $\pi_*$. Thus $Qx \neq 0$. Putting everything together, we thus have that $x$ lies in the subcomplex
\[
\begin{tikzpicture}[scale=1.5]
\node (00) at (0,0) {$\mathbb{F}$};
\node (10) at (1,0) {$0$};
\node (20) at (2,0) {$\mathbb{F}$};
\node (01) at (0,1) {$x$};
\node (11) at (1,1) {$*$};
\node (21) at (2,1) {$x$};
\node (02) at (0,2) {$\mathbb{F}$};
\node (12) at (1,2) {$0$};
\node (22) at (2,2) {$\mathbb{F}$};

\path[->,font=\scriptsize,>=angle 90]
(21) edge node[above]{$\cdot Q$} (00)
(22) edge node[above]{$\cdot Q$} (01);
\end{tikzpicture}
\]
But this contradicts the fact that $Q^3 = 0$. Hence the Gysin sequence in grading $q$ is the direct sum of (\ref{eq2}) and either (\ref{eq3}) or (\ref{eq4}). A similar diagram chase, together with the fact that the monopole Floer homology of $(-Y, s)$ is supported only in even dimensions, shows that there can be no other elements of the Pin(2)-homology in gradings $q + 1$ or $q - 1$. Lemma \ref{prop4} thus implies that the entire Gysin sequence must be isomorphic to the direct sum of copies of (\ref{eq5}) (all in even gradings) and a tower of repeated copies of 
\[
\begin{tikzpicture}[scale=1.5]
\node (00) at (0,0) {$\mathbb{F}$};
\node (10) at (1,0) {$\mathbb{F}$};
\node (20) at (2,0) {$\mathbb{F}$};
\node (01) at (0,1) {$\mathbb{F}$};
\node (11) at (1,1) {$0$};
\node (21) at (2,1) {$\mathbb{F}$};
\node (02) at (0,2) {$\mathbb{F}$};
\node (12) at (1,2) {$\mathbb{F}$};
\node (22) at (2,2) {$\mathbb{F}$};

\path[->,font=\scriptsize,>=angle 90]
(10) edge node[above]{$\pi_*$} (20)
(02) edge node[above]{$\iota_*$} (12)
(21) edge node[above]{$\cdot Q$} (00)
(22) edge node[above]{$\cdot Q$} (01);
\end{tikzpicture}
\]
\noindent
stacked on top of each other with a grading shift of four. The lowest line of this tower has grading $r$ (or rather, $\rho$ in the monopole Floer homology). \\
\\
The above computation determines the Pin(2)-homology as an abelian group, and also specifies the $Q$-action. In order to determine the $V$-action, we must be a bit more circumspect as to the precise nature of the maps $\iota_*$ and $\pi_*$. Consider the four $\mathbb{F}[V]$-submodules in the statement of Theorem \ref{prop6}. In gradings $q = \rho + 3$ mod $4$, we see that the Pin(2)-homology is indeed identically zero. In gradings $q = \rho + 2$ mod $4$, the map $\iota_*$ is an $\mathbb{F}[V]$-module isomorphism from the Pin(2)-homology onto the subspace $\ker (1 + J) = \spa \{(1 + J)E_i\} \oplus F$, which establishes the second equality claimed in Theorem \ref{prop6}. In gradings $q = \rho$ mod $4$, the map $\pi_*$ surjects onto the Pin(2)-homology with kernel $\im (1 + J) = \spa \{(1 + J)E_i\}$, proving the fourth equality. \\
\\
It remains to establish the third equality and express the $Q$-action in terms of the claimed isomorphisms. From the proof of Theorem \ref{prop7}, we know that the $a$-tower may be identified with the appropriate submodule of $\mathbb{H}'(\Gamma, k) \subseteq \mathbb{H}^0(\Gamma, k)/\im (1 + J)$ in gradings $q -\sigma = r$ mod $4$. The above decomposition of the Gysin sequence then shows that multiplication by $Q$ is an isomorphism from the $[r + 1]$-submodule onto this tower; hence the $[r + 1]$-submodule is a single $V$-tower which we may also identify with $\mathbb{H}'(\Gamma, k)$. This proves the third equality and gives the $Q$-action from the $[r + 1]$- to the $[r]$-submodule. Finally, the $Q$-action from the $[r+2]$- to the $[r+1]$-submodule is injective on $F' \cong F \subseteq \ker (1 +J )$ and zero otherwise, which is the description of the $Q$-action given in Theorem \ref{prop6}.
\end{proof}
\end{subsection}

\begin{subsection}{Examples}
We close this section with a few basic examples. These are not new computations, but serve to illustrate the framework that we have established. As a visual aid, we use graded root diagrams to describe the lattice cohomology (see e.g.\ \cite{Nem2}).

\begin{exmp}[$\Sigma(2, 3, 5)$]\label{ex1}
The monopole Floer homology of $-\Sigma(2, 3, 5)$ is given by a single $U$-tower, $\mathcal{U}_{-2}^+$. (See e.g.\ Section 3.2 of \cite{OS}.) Accordingly, $\rho = 2\delta = -2$, and applying Theorem \ref{prop7}, we have that $a = -2$, $b = -1$, and $c = 0$. Thus the Pin(2)-homology is simply 
\[
\HS(-\Sigma(2, 3, 5)) = \mathcal{V}_0^+ \oplus \mathcal{V}_{-1}^+ \oplus \mathcal{V}_{-2}^+.
\]
Compare with \cite{Lin2}.
\end{exmp}

\begin{exmp}[$\Sigma(3, 5, 7)$]\label{ex2}
The monopole Floer homology of $-\Sigma(3, 5, 7)$ is given by 
\[
\HM(-\Sigma(3, 5, 7)) = \mathcal{U}_{-2}^+ \oplus \mathbb{F}_{(-2)} \oplus \mathbb{F}_{(0)} \oplus \mathbb{F}_{(0)},
\]
with the subscripts on each $\mathbb{F}$ indicating the grading. (See e.g.\ Section 3.2 of \cite{OS}.) With a slight change-of-basis, this is represented pictorially by the graded root in Figure \ref{fig1}.

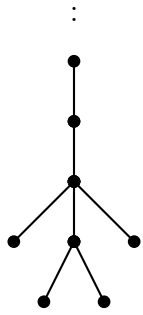
\begin{figure}[h!]
\begin{tikzpicture}[thick,scale=0.8]%
	\node[text width=0.1cm] at (0, 4) {\vdots};
	\draw (0, 0) node[circle, draw, fill=black!100, inner sep=0pt, minimum width=4pt] {} -- (0, 1) node[circle, draw, fill=black!100, inner sep=0pt, minimum width=4pt] {};
	\draw (0, 1) node[circle, draw, fill=black!100, inner sep=0pt, minimum width=4pt] {} -- (0, 2) node[circle, draw, fill=black!100, inner sep=0pt, minimum width=4pt] {};
	\draw (0, 2) node[circle, draw, fill=black!100, inner sep=0pt, minimum width=4pt] {} -- (0, 3) node[circle, draw, fill=black!100, inner sep=0pt, minimum width=4pt] {};
	
	\draw (0, 0) node[circle, draw, fill=black!100, inner sep=0pt, minimum width=4pt] {} -- (-0.5, -1) node[circle, draw, fill=black!100, inner sep=0pt, minimum width=4pt] {};
	\draw (0, 0) node[circle, draw, fill=black!100, inner sep=0pt, minimum width=4pt] {} -- (0.5, -1) node[circle, draw, fill=black!100, inner sep=0pt, minimum width=4pt] {};
	
	\draw (1, 0) node[circle, draw, fill=black!100, inner sep=0pt, minimum width=4pt] {} -- (0, 1) node[circle, draw, fill=black!100, inner sep=0pt, minimum width=4pt] {};
	\draw (-1, 0) node[circle, draw, fill=black!100, inner sep=0pt, minimum width=4pt] {} -- (0, 1) node[circle, draw, fill=black!100, inner sep=0pt, minimum width=4pt] {};
\end{tikzpicture}
\caption{Root diagram for $-\Sigma(3, 5, 7)$.}\label{fig1}
\end{figure}

\noindent
Here, each node represents an $\mathbb{F}$-summand, and edges correspond to multiplication by $U$. Note that our basis is chosen in such a way so that the $J$-action corresponds to reflection about the vertical axis in the diagram. Clearly, $\rho = 0$ and $2\delta = -2$. Applying Theorem \ref{prop7}, we have that $a = 0$, $b = 1$, and $c = - 2$. The Pin(2)-homology is given by 
\[
\HS(-\Sigma(3, 5, 7)) = (\mathcal{V}_{-2}^+ \oplus \mathcal{V}_1^+ \oplus \mathcal{V}_0^+) \oplus \mathbb{F}_{(0)}.
\]
\end{exmp}

\begin{exmp}[$\Sigma(2, 7, 15)$]\label{ex3}
The monopole Floer homology of $-\Sigma(2, 7, 15)$ is given by 
\[
\HM(-\Sigma(2, 7, 15)) = \mathcal{U}_{0}^+ \oplus (\mathbb{F}_{(0)} \oplus \mathbb{F}_{(2)}) \oplus \mathbb{F}_{(2)} \oplus \mathbb{F}_{(2)} \oplus \mathbb{F}_{(6)} \oplus \mathbb{F}_{(6)}
\]
with the action of $U$ taking the $\mathbb{F}_{(2)}$ inside of the parentheses onto the $\mathbb{F}_{(0)}$. (See e.g.\ \cite{Twee}.) After a change-of-basis, the corresponding graded root is given in Figure \ref{fig2}. 

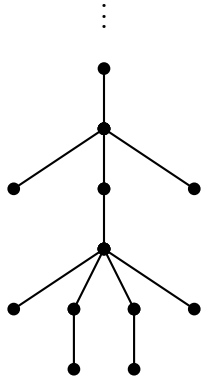
\begin{figure}[h!]
\begin{tikzpicture}[thick,scale=0.8]%
	\node[text width=0.1cm] at (0, 4) {\vdots};
	\draw (0, 0) node[circle, draw, fill=black!100, inner sep=0pt, minimum width=4pt] {} -- (0, 1) node[circle, draw, fill=black!100, inner sep=0pt, minimum width=4pt] {};
	\draw (0, 1) node[circle, draw, fill=black!100, inner sep=0pt, minimum width=4pt] {} -- (0, 2) node[circle, draw, fill=black!100, inner sep=0pt, minimum width=4pt] {};
	\draw (0, 2) node[circle, draw, fill=black!100, inner sep=0pt, minimum width=4pt] {} -- (0, 3) node[circle, draw, fill=black!100, inner sep=0pt, minimum width=4pt] {};
	
	\draw (0, 0) node[circle, draw, fill=black!100, inner sep=0pt, minimum width=4pt] {} -- (-0.5, -1) node[circle, draw, fill=black!100, inner sep=0pt, minimum width=4pt] {};
	\draw (0, 0) node[circle, draw, fill=black!100, inner sep=0pt, minimum width=4pt] {} -- (0.5, -1) node[circle, draw, fill=black!100, inner sep=0pt, minimum width=4pt] {};
	
	\draw (-0.5, -1) node[circle, draw, fill=black!100, inner sep=0pt, minimum width=4pt] {} -- (-0.5, -2) node[circle, draw, fill=black!100, inner sep=0pt, minimum width=4pt] {};
	\draw (0.5, -1) node[circle, draw, fill=black!100, inner sep=0pt, minimum width=4pt] {} -- (0.5, -2) node[circle, draw, fill=black!100, inner sep=0pt, minimum width=4pt] {};	
	
	\draw (0, 0) node[circle, draw, fill=black!100, inner sep=0pt, minimum width=4pt] {} -- (-1.5, -1) node[circle, draw, fill=black!100, inner sep=0pt, minimum width=4pt] {};
	\draw (0, 0) node[circle, draw, fill=black!100, inner sep=0pt, minimum width=4pt] {} -- (1.5, -1) node[circle, draw, fill=black!100, inner sep=0pt, minimum width=4pt] {};
	
	\draw (0, 2) node[circle, draw, fill=black!100, inner sep=0pt, minimum width=4pt] {} -- (-1.5, 1) node[circle, draw, fill=black!100, inner sep=0pt, minimum width=4pt] {};
	\draw (0, 2) node[circle, draw, fill=black!100, inner sep=0pt, minimum width=4pt] {} -- (1.5, 1) node[circle, draw, fill=black!100, inner sep=0pt, minimum width=4pt] {};
\end{tikzpicture}
\caption{Root diagram for $-\Sigma(2, 7, 15)$.}\label{fig2}
\end{figure}
\noindent
In this case, $\rho = 4$ and $2\delta = 0$. Applying Theorem \ref{prop7}, we have $a = 4$, $b = 5$, and $c = 2$. The Pin(2)-homology is given by 
\[
\HS(-\Sigma(2, 7, 15)) = (\mathcal{V}_{2}^+ \oplus \mathcal{V}_5^+ \oplus \mathcal{V}_4^+) \oplus \mathbb{F}_{(0)} \oplus \mathbb{F}_{(2)} \oplus \mathbb{F}_{(6)}.
\]
Note that in this example, the $a$-tower does not extend ``all the way down" - it stops at grading $a = 4$, even though there is a summand $\mathbb{F}_{(0)}$ lower than it in the same mod $4$ grading.
\end{exmp}

\noindent
The results of Theorem \ref{prop6} are easily converted into the language of graded roots by choosing a basis for the lattice cohomology in which the root diagram is symmetric about reflection through the vertical axis. (We remark that Lemma \ref{prop4} has an easy re-formulation by noticing that $r$ is precisely the grading at which the infinite ``central stem" of such a root either forks or vanishes.) Because the prescription of Theorem \ref{prop6} is straightforward, the Pin(2)-homology is easily computed once the lattice cohomology is known. See e.g.\ \cite{OS} or \cite{Nem1} for algorithms computing the lattice cohomology and \cite{Twee} for an extensive list of lattice cohomology calculations. We refer the reader to Lin \cite{Lin2} and Stoffregen \cite{Stoff} for more computations of the Pin(2)-homology of Seifert fibered spaces. 
\end{subsection}
\end{section}

\begin{section}{Applications and Further Developments}\label{sec3}
\noindent
In this section, we prove that Manolescu's conjecture $\beta(-Y, s) = \bar{\mu}(Y, s)$ holds for all plumbed 3-manifolds with at most one bad vertex. Since $\beta(-Y, s)$ coincides with half of the parity invariant $\rho$ by Theorem \ref{prop7}, this reduces to showing that half of $\rho$ coincides with $\bar{\mu}$. The fact that these are equal provides an interesting new interpretation of $\bar{\mu}$ in terms of the structure of the Heegaard Floer/monopole Floer homology of $(-Y, s)$. 

\begin{subsection}{Relations with the Neumann-Siebenmann Invariant}
We begin by recalling the definition of the Neumann-Siebenmann invariant (see \cite{Neu}, \cite{Sieb}). Let $Y$ be a plumbed 3-manifold with plumbing diagram $\Gamma$, and let $s$ be a spin structure on $Y$. Among the characteristic vectors $k$ on $\Gamma$ corresponding to spin$^c$ structures on $W(\Gamma)$ limiting to $s$ on $Y$, there is a unique vector $w$ such that all of the coordinates of $w$ in the natural basis of $L_\Gamma$ are either zero or one. (Note that since $s$ is self-conjugate, $w$ necessarily lies in $L_\Gamma$, rather than $L_\Gamma'$.) We then define:
\[
\bar{\mu}(Y, s) = \dfrac{1}{8}\left( \text{sign}(\Gamma) - w^2 \right).
\]
Here, $\text{sign}(\Gamma)$ is the signature of the intersection matrix of $\Gamma$ and $w^2$ is the self-pairing of $w$. It is shown in \cite{Neu} that $\bar{\mu}$ is an integer lift of the Rohklin invariant defined for plumbed rational homology spheres, and is independent of the choice of plumbing. The set of basis vectors having nonzero coefficient in $w$ is referred to as the \textit{Wu set} of the pair $(Y, s)$. Our proof that $\rho(Y, s) = 2\bar{\mu}(Y, s)$ rests on the well-known fact that any Wu set consists of pairwise non-adjacent vertices in $\Gamma$. For completeness, we give a proof of this lemma below: 

\begin{lem}\label{lemconj}
Let $\Gamma$ be any plumbing tree, and let $w$ be a characteristic vector for $\Gamma$ such that all of the coordinates of $w$ in the natural basis of $L_\Gamma$ are either zero or one. After appropriate permutation, we may assume that $w = e_1 + e_2 + \cdots + e_n$ for some $0 \leq n \leq |\Gamma|$. Then $(e_i, e_j) = 0$ for all $1 \leq i \neq j \leq n$; that is, the vertices $e_i$ for $1 \leq i \leq n$ are pairwise non-adjacent in $\Gamma$.
\end{lem}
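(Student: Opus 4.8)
The plan is to exploit the defining congruence for characteristic vectors together with the fact that $\Gamma$ is a tree. Recall that in the natural basis the intersection form satisfies $(e_i, e_j) = 1$ when $e_i$ and $e_j$ are adjacent in $\Gamma$ and $(e_i, e_j) = 0$ otherwise (for $i \ne j$), while $(e_i, e_i) = m(e_i)$. First I would pair $w = e_1 + \cdots + e_n$ against each basis vector $e_i$ with $1 \le i \le n$. Since $w$ is characteristic, $(w, e_i) \equiv (e_i, e_i) \pmod 2$, and expanding the left-hand side gives $(e_i, e_i) + \sum_{j \ne i,\, j \le n} (e_j, e_i) \equiv (e_i, e_i) \pmod 2$, so that $\sum_{j \ne i,\, j \le n} (e_j, e_i) \equiv 0 \pmod 2$. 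In other words, each vertex of the Wu set is adjacent in $\Gamma$ to an even number of other vertices of the Wu set.

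Next I would pass to the full subgraph $\Gamma'$ of $\Gamma$ induced on the vertex set $\{e_1, \dots, e_n\}$. By the previous step, every vertex of $\Gamma'$ has even valence in $\Gamma'$. On the other hand, since $\Gamma$ is a tree, $\Gamma'$ is a forest, hence acyclic.

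Finally, the conclusion follows from a standard fact: a nonempty finite forest contains a vertex of valence at most $1$ (take an endpoint of a longest path in any nontrivial component). So if $\Gamma'$ had any edge, it would contain a component with at least two vertices, hence a vertex of valence exactly $1$, contradicting that all valences in $\Gamma'$ are even. Therefore $\Gamma'$ has no edges, i.e.\ $(e_i, e_j) = 0$ for all $1 \le i \ne j \le n$, as claimed.

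There is no serious obstacle here; the only points needing a little care are the parity bookkeeping in the expansion of $(w, e_i)$ — where the $j = i$ term must be seen to cancel against $(e_i, e_i)$ on the right — and the observation that it is the acyclicity of $\Gamma$, not merely its connectedness, that forces an even-valence subgraph to be edgeless.
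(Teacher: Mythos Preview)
Your proof is correct and follows essentially the same approach as the paper's own proof: both use the characteristic condition $(w,e_i)\equiv(e_i,e_i)\pmod 2$ to deduce that every vertex of the induced subgraph $\Gamma'$ on the Wu set has even valence, and then invoke the acyclicity of $\Gamma$ to conclude that $\Gamma'$ has no edges. Your final step spells out the leaf argument slightly more explicitly than the paper does, but the substance is identical.
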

\begin{proof}
Let $\Gamma'$ be the induced subgraph of $\Gamma$ spanned by the vertices corresponding to $e_1, e_2, \ldots, e_n$. Because $w$ is characteristic, we have that
\[
(e_1 + e_2 + \cdots e_n, e_i) = (e_i, e_i) \mod 2
\]
for all $1 \leq i \leq n$. Since $\Gamma$ is a plumbing tree, two distinct vertices have pairing one precisely when connected by an edge in $\Gamma$, and have pairing zero otherwise. Hence the above equality shows that in our induced subgraph $\Gamma'$, every vertex has an even number of adjacencies. But $\Gamma$ (and thus $\Gamma'$) has no cycles, so the only way for this to be possible is for every vertex to be isolated. This proves the lemma.
\end{proof}
\noindent
We now turn to the computation of $\rho(Y, s)$:
\begin{thm}\label{propmu}
Let $Y$ be a rational homology 3-sphere given by surgery on a connected, negative-definite graph with at most one bad vertex (in the sense of \cite{OS}). Let $s$ be a spin structure on $Y$ (which we may view as a self-conjugate spin$^c$ structure). Then $\rho(Y, s) = 2\bar{\mu}(Y, s)$.
\end{thm}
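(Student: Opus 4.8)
The plan is to reduce the statement to the computation of the invariant $r$ of Lemma~\ref{prop4} for one convenient choice of characteristic vector, namely $k = w$, where $w$ is the distinguished vector all of whose coordinates in the natural basis of $L_\Gamma$ are $0$ or $1$. Since $w$ lies in the orbit $[k]$ (its spin$^c$ structure limits to $s$), this is a legitimate choice, and by the remark following Theorem~\ref{prop3} together with Corollary~\ref{prop5} we have $\rho(Y,s) = r + \sigma(\Gamma, w)$. Granting the claim that $r = 0$ for this choice, the theorem follows at once: $\sigma(\Gamma, w) = -\tfrac{1}{4}\bigl(|\Gamma| + w^2\bigr)$, while negative-definiteness of $\Gamma$ gives $\mathrm{sign}(\Gamma) = -|\Gamma|$, so
\[
2\bar\mu(Y,s) = \tfrac{1}{4}\bigl(\mathrm{sign}(\Gamma) - w^2\bigr) = -\tfrac{1}{4}\bigl(|\Gamma| + w^2\bigr) = \sigma(\Gamma, w) = \rho(Y,s).
\]

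To prove $r = 0$, I would first reinterpret $r$ geometrically. By the proof of Lemma~\ref{prop4}, every $J$-invariant connected component of a sublevel set $S_n$ contains the unique fixed point of $J$, namely the reflection center $-w/2$; and conversely, once $n$ is large enough that $-w/2 \in S_n$, the component of $S_n$ through $-w/2$ is $J$-invariant. Hence $r/2$ is exactly the least $n$ with $-w/2 \in S_n$ (thinking of $S_n$ as the associated closed subset of $\mathbb{R}^{|\Gamma|}$). Now $-w/2$ has coordinate $-\tfrac{1}{2}$ in every slot of the Wu set $W$ of $(Y,s)$ and coordinate $0$ in every other slot, so it lies in the relative interior of the $|W|$-dimensional lattice cube $C$ with vertex set $\bigl\{-\sum_{i \in T} e_i : T \subseteq W\bigr\}$. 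Any lattice cube containing $-w/2$ has $C$ as a face, hence weight at least $w_{|W|}(C)$; therefore $-w/2 \in S_n$ if and only if $C \subseteq S_n$, i.e.\ if and only if $w_{|W|}(C) \le n$. Thus $r/2 = w_{|W|}(C) = \max_{T \subseteq W} w_0\bigl(-\sum_{i \in T} e_i\bigr)$.

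Finally I would compute this maximum using Lemma~\ref{lemconj}. That lemma gives $(e_i, e_j) = 0$ for all distinct $i, j \in W$, so for $x = -\sum_{i \in T} e_i$ with $T \subseteq W$ both pairings collapse to the diagonal: $(x,x) = \sum_{i \in T}(e_i, e_i)$, and (using $w = \sum_{j \in W} e_j$ and again non-adjacency within $W$) $(x, w) = -\sum_{i \in T}(e_i, e_i)$. Hence $(x,x) + (x,w) = 0$ and $w_0(x) = -\tfrac{1}{2}\bigl((x,x) + (x,w)\bigr) = 0$ for every $T \subseteq W$. Therefore $w_{|W|}(C) = 0$, so $r = 0$, and the theorem is proved.

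I expect the main obstacle to be the geometric identification of $r$ in the second paragraph — in particular, verifying carefully that $C$ is the cheapest lattice cube whose closure contains $-w/2$ (so that $-w/2 \in S_n$ exactly when $C \subseteq S_n$) and that the $J$-fixed stem of the graded root is exactly the component through $-w/2$. Once that bookkeeping is settled, the remaining input is the short pairing computation above, which is entirely forced by Lemma~\ref{lemconj}.
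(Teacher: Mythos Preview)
Your proposal is correct and follows essentially the same argument as the paper: choose the Wu vector $w$ as representative, reduce to showing $r=0$ by identifying $\sigma(\Gamma,w)=2\bar\mu(Y,s)$, interpret $r/2$ as the least $n$ for which the reflection center $-w/2$ lies in $S_n$, and then verify via Lemma~\ref{lemconj} that every vertex of the minimal cube through $-w/2$ has weight zero. The only cosmetic difference is that the paper parametrizes the vertices of that cube as $-w+v$ with $v\in C_n$ and checks $(w,v)=(v,v)$, whereas you parametrize them as $-\sum_{i\in T}e_i$ and check $(x,x)+(x,w)=0$; these are the same computation.
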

\begin{proof}
Let $[k]$ be the equivalence class of characteristic vectors on $\Gamma$ corresponding to spin$^c$ structures on $W(\Gamma)$ limiting to $s$ on $Y$. Let $w \in [k]$ be the representative described in the definition of the Neumann-Siebenmann invariant, so that $\bar{\mu}(Y, s) = (-|\Gamma | - w^2)/8$. When computing the lattice cohomology of $\Gamma$, we are free to choose any representative of $[k]$, but the most convenient choice is obviously $w$. Then the grading shift of Theorem \ref{prop3} is given by $\sigma(\Gamma, w) = (- |\Gamma| - w^2)/4$. Since this is already twice the Neumann-Siebenmann invariant, we must thus show that the parity invariant $r$ of $\mathbb{H}^0(\Gamma, w)$ is zero. Now, the rank of $\mathbb{H}^0(\Gamma, w)$ is odd in grading $2n$ precisely when the sublevel set $S_n$ has a connected component which is invariant under the action of $J$; as shown in the proof of Lemma \ref{prop4}, such a connected component is uniquely characterized by containing the point of reflection $-w/2$. Hence it suffices to show that $-w/2 \in S_n$ precisely when $n \geq 0$. \\
\\
In order to illustrate the intuition behind the proof, suppose for a moment that $w$ is in fact zero (which is equivalent to the pairing of $\Gamma$ being even). Then the weight $w_0$ of $-w/2$ (as defined in Section \ref{sec1}) is zero, and this immediately implies the claim. If $w$ is not zero, however, then $-w/2$ does not lie in the lattice $L_\Gamma$, and in order to determine whether $-w/2$ lies in a sublevel set, we must instead compute the weight of the smallest lattice cube of $L_\Gamma$ containing $-w/2$. After an appropriate permutation of the natural basis of $L_\Gamma$, we may assume that
\[
w = e_1 + e_2 + \cdots + e_n
\]
for some $0 \leq n \leq |\Gamma|$. Let $C_n$ be the vertices of the $n$-dimensional lattice cube containing the sides $e_1, e_2, \ldots, e_n$; that is, define
\[
C_n = \{c_1e_1 + c_2e_2 + \cdots c_ne_n : \text{each }c_i = 0 \text{ or }1\}.
\]
Then the smallest lattice cube containing $-w/2$ is $n$-dimensional and is given by the translate $-w + C_n$. Since the weight of a lattice cube is defined to be the maximum over the weights of its vertices, it suffices to show that $w_0(-w + v) = 0$ for all $v \in C_n$. Now, since
\[
w_0(-w + v) = -((-w + v, -w + v) + (-w + v, w))/2 = ((w, v) - (v, v))/2,
\]
this is equivalent to showing that $(w, v) = (v, v)$ for all $v \in C_n$. (Note that since $w$ is characteristic, this equality is always true modulo two, but it is strict equality that we must establish.) By Lemma \ref{lemconj}, however, we have that $(e_i, e_j) = 0$ whenever $1 \leq i \neq j \leq n$. Expanding the pairings $(w, v)$ and $(v, v)$ immediately establishes the equality and completes the proof.
\end{proof}
\noindent
For plumbed 3-manifolds with at most one bad vertex, Theorem \ref{propmu} provides an easy characterization of $\bar{\mu}(Y, s)$ in terms of the ranks of the Heegaard Floer/monopole Floer homology of $(-Y, s)$. If we also happen to know that $\rho$ coincides with (twice) the Fr\o yshov invariant of $(-Y, s)$, then we additionally obtain a relation between $\bar{\mu}$ and the Ozsv\'ath-Szab\'o $d$-invariant. This occurs, for example, if $Y$ is known to be an $L$-space, although certainly the condition of being an $L$-space is not necessary. In particular, for rational surface singularities and spherical 3-manifolds, we recover the results of Stipsicz \cite{Stip} and Ue \cite{Ue}. \\
\\
Finally, we observe that Theorem \ref{prop7} and Theorem \ref{propmu} together imply Theorem \ref{propconj}. This proves Manolescu's conjecture for all plumbed 3-manifolds with at most one bad vertex. 
\end{subsection}


\begin{subsection}{Further Developments}
\noindent
In this subsection, we prove some tentative results aimed at generalizing our computations to a larger class of manifolds. There are two main difficulties with attempting to extend Theorems \ref{prop6} and \ref{prop7}. First, the precise relation between lattice cohomology and Heegaard Floer/monopole Floer homology in the case of arbitrary negative-definite plumbings is currently unknown. In \cite{OSS} it is shown that there exists a spectral sequence from lattice homology to Heegaard Floer homology, but at the moment the two are only conjecturally isomorphic. (See the discussion preceding Example \ref{extwobadv} for the two-bad-vertex case.) Nevertheless, one can still ask what extra information is needed to determine the Pin(2)-homology from the Gysin sequence. For instance, in the situation of Section \ref{sec2}, we saw that knowledge of the map $\iota_* \circ \pi_*$ was sufficient, and we showed that this additional data could be found in the lattice complex. \\
\\
Unfortunately, the proof of this sufficiency relied on the fact that the monopole Floer homology was supported only in even gradings, a structure result that certainly does not hold in general. Our first result identifies some extra algebraic data that (in theory) suffices to determine the Pin(2)-homology (at least as an abelian group) once the monopole Floer homology is known. \\
\\
We begin by understanding the possible decompositions of the Gysin sequence. Denote by $\mathcal{I}_0$ the exact sequence
\[
\mathbb{F} \xrightarrow{\iota_*} \mathbb{F} \oplus \mathbb{F} \xrightarrow{\pi_*} \mathbb{F},
\]
and by $\mathcal{I}_1$ the exact sequence
\[
\begin{tikzpicture}[scale=1.5]
\node (00) at (0,0) {$\mathbb{F}$};
\node (10) at (1,0) {$\mathbb{F}$};
\node (20) at (2,0) {$\mathbb{F}$};
\node (01) at (0,1) {$\mathbb{F}$};
\node (11) at (1,1) {$\mathbb{F}$};
\node (21) at (2,1) {$\mathbb{F}$};

\path[->,font=\scriptsize,>=angle 90]
(10) edge node[above]{$\pi_*$} (20)
(01) edge node[above]{$\iota_*$} (11)
(21) edge node[above]{$\cdot Q$} (00);
\end{tikzpicture},
\]
and finally by $\mathcal{I}_2$ the exact sequence
\[
\begin{tikzpicture}[scale=1.5]
\node (00) at (0,0) {$\mathbb{F}$};
\node (10) at (1,0) {$\mathbb{F}$};
\node (20) at (2,0) {$\mathbb{F}$};
\node (01) at (0,1) {$\mathbb{F}$};
\node (11) at (1,1) {$0$};
\node (21) at (2,1) {$\mathbb{F}$};
\node (02) at (0,2) {$\mathbb{F}$};
\node (12) at (1,2) {$\mathbb{F}$};
\node (22) at (2,2) {$\mathbb{F}$};

\path[->,font=\scriptsize,>=angle 90]
(10) edge node[above]{$\pi_*$} (20)
(02) edge node[above]{$\iota_*$} (12)
(21) edge node[above]{$\cdot Q$} (00)
(22) edge node[above]{$\cdot Q$} (01);
\end{tikzpicture}.
\]
Here, every pair of $\mathbb{F}$-summands on the left and the right represent the same element in the Pin(2)-homology, and consecutive lines in each sequence differ by a grading shift of one. We let the lowest line of each sequence have grading zero, so that $\mathcal{I}_n[d]$ is the exact sequence $\mathcal{I}_n$ shifted so that the lowest line has grading $d$. \\
\\
Because $Q^3 = 0$ and we are working over $\mathbb{F}_2$, a straightforward diagram chase shows that each element in the Gysin sequence lies in a subcomplex isomorphic to one of the above three. Hence the Gysin sequence decomposes (non-canonically) into a direct sum of $\mathcal{I}_0$, $\mathcal{I}_1$, and $\mathcal{I}_2$. Setting aside the structure of the Pin(2)-homology as an $\mathcal{R}$-module, one might then ask whether the isomorphism class of the Gysin sequence (and thus the ranks of the Pin(2)-homology) can, in general, be determined from the lattice cohomology. \\
\\
In order to approach this question, we first recall that the Gysin sequence may be expressed in the language of an exact couple:
\[
\begin{tikzpicture}[scale=1.5]
\node (A) at (0,1.5) {$A = \HM(Y, s)$};
\node (B) at (-1.2,0) {$\HS(Y, s)$};
\node (C) at (1.2,0) {$\HS(Y, s)$};

\path[->,font=\scriptsize,>=angle 90]
(A) edge node[right]{$\pi_*$} (C)
(C) edge node[above]{$\cdot Q$} (B)
(B) edge node[left]{$\iota_*$} (A);
\end{tikzpicture}.
\]
Taking the derived couple of this results in another exact couple whose derived homology group (in the case of Lemma \ref{prop4}) is precisely the derived lattice cohomology under the isomorphism of Theorem \ref{prop3}. However, in our situation, the lattice cohomology is not necessarily isomorphic to the monopole Floer homology, so we denote the derived homology group simply by $A'$:
\[
\begin{tikzpicture}[scale=1.5]
\node (A) at (0,1.5) {$A' = H(A, \iota_* \circ \pi_*)$};
\node (B) at (-1.2,0) {$\im Q$};
\node (C) at (1.2,0) {$\im Q$};

\path[->,font=\scriptsize,>=angle 90]
(A) edge node[right]{$\pi_*'$} (C)
(C) edge node[above]{$\cdot Q$} (B)
(B) edge node[left]{$\iota_*'$} (A);
\end{tikzpicture}.
\]
Here, $A'$ is the homology of $A$ with respect to the differential $\iota_* \circ \pi_*$, the group $\im Q$ is the image of multiplication by $Q$, and the maps $\iota_*'$ and $\pi_*'$ are the usual induced maps in the derived couple. Deriving one more time results in a third exact couple, but because $Q^3 = 0$, this degenerates into the short exact sequence
\[
0 \rightarrow \im Q^2 \rightarrow A'' = H(A', \iota_*' \circ \pi_*') \rightarrow \im Q^2 \rightarrow 0.
\]
\noindent
\\
We now claim that the two groups $A'$ and $A''$ (along with the original monopole Floer homology) suffice to determine the isomorphism class of the Gysin sequence.
\begin{thm}\label{prop9}
Let the monopole Floer homology of $\HM(Y, s)$ be fixed. If, in addition, we know the ranks of the derived groups $A'$ and $A''$, then the Pin(2)-equivariant monopole Floer homology of $(Y, s)$ is determined as an abelian group.
\end{thm}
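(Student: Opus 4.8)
The plan is to decompose the Gysin sequence into a direct sum of copies of $\mathcal{I}_0$, $\mathcal{I}_1[d]$, and $\mathcal{I}_2[d]$ (as observed in the paragraph preceding the theorem), and then to show that the number of summands of each type in each grading is pinned down by the data of $\HM(Y,s)$, $A'$, and $A''$. Since the Pin(2)-homology as an abelian group is read off from this collection of summands, this suffices. First I would fix a grading $d$ and write $m_0(d)$, $m_1(d)$, $m_2(d)$ for the number of summands of type $\mathcal{I}_0$, $\mathcal{I}_1[d]$, $\mathcal{I}_2[d]$ respectively whose lowest line sits in grading $d$. (A small bookkeeping point: each $\mathcal{I}_n$ occupies $n+1$ consecutive gradings, so when I restrict attention to a fixed grading $q$ of $\HM$, the relevant multiplicities are $m_0(q)$, $m_1(q)$, $m_1(q-1)$, $m_2(q)$, $m_2(q-1)$, $m_2(q-2)$; I would set up the linear relations with this indexing in mind.)

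Next I would extract three families of linear equations in the $m_i$. The first comes from $\HM(Y,s)$ itself: in grading $q$, the rank of $\HM$ equals $2m_0(q) + m_1(q) + m_1(q-1) + m_2(q) + m_2(q-2)$, because $\mathcal{I}_0$ contributes two $\mathbb{F}$'s in its single grading, $\mathcal{I}_1$ contributes one $\mathbb{F}$ to $A=\HM$ in each of its two gradings, and $\mathcal{I}_2$ contributes one $\mathbb{F}$ to $A$ in its top and bottom gradings but nothing in the middle. The second family comes from $A' = H(A, \iota_*\circ\pi_*)$: the differential $\iota_*\circ\pi_*$ on an $\mathcal{I}_0$-summand is an isomorphism $\mathbb{F}\to\mathbb{F}$ (it kills the $\mathcal{I}_0$ piece entirely), on an $\mathcal{I}_1$-summand it is zero (so $\mathcal{I}_1$ survives to $A'$, contributing $1$ to each of its two gradings), and on an $\mathcal{I}_2$-summand it is the rank-one map from the top grading to the bottom grading (so $\mathcal{I}_2$ contributes $1$ to its middle grading only). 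Hence the rank of $A'$ in grading $q$ is $m_1(q) + m_1(q-1) + m_2(q-1)$. The third family comes from $A''$: deriving once more, the $\mathcal{I}_1$-summands carry a nonzero $\iota_*'\circ\pi_*'$ and die, while the $\mathcal{I}_2$-summands survive (their surviving middle class has $Q^2$ acting nontrivially in the sense recorded by the short exact sequence $0\to\im Q^2\to A''\to\im Q^2\to 0$), so the rank of $A''$ in grading $q$ is $2m_2(q-1)$. I would verify each of these contribution counts by a direct inspection of the three model sequences $\mathcal{I}_0,\mathcal{I}_1,\mathcal{I}_2$ together with the description of the derived couples given just above the theorem.

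Finally I would solve the resulting linear system. From the $A''$ equations we recover $m_2(q-1) = \tfrac12\operatorname{rk} A''_q$ in every grading, hence all $m_2$. Substituting into the $A'$ equations gives $m_1(q) + m_1(q-1)$ in every grading; since $m_1(q)=0$ for $q$ sufficiently negative (all the groups involved are bounded below), a downward induction on $q$ then determines every $m_1(q)$ individually. Substituting $m_1$ and $m_2$ into the $\HM$ equations then determines every $m_0(q)$. Thus the full multiset of summands $\{\mathcal{I}_0, \mathcal{I}_1[d], \mathcal{I}_2[d]\}$ is determined, and reading off the ranks of $\HS(Y,s)$ in each grading from this decomposition completes the proof. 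The main obstacle — really the only nontrivial point — is the bookkeeping in the second and third steps: one must correctly identify how the differentials $\iota_*\circ\pi_*$ and $\iota_*'\circ\pi_*'$ act on each of the three model subquotients and hence exactly which gradings each model sequence contributes to in $A'$ and $A''$; once those contribution tables are right, the linear algebra is immediate. One should also remark that $A'$ and $A''$ here are honest invariants of $(Y,s)$ depending only on $\HM$, its $\mathbb{Z}/2\mathbb{Z}$-action $j$, and the $Q$-action on $\HS$, so the statement is not circular.
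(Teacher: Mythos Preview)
Your approach is essentially the same as the paper's: decompose the Gysin sequence into copies of $\mathcal{I}_0$, $\mathcal{I}_1$, and $\mathcal{I}_2$, compute each summand's contribution to $A'$ and $A''$, and invert the resulting linear system using boundedness below. The strategy is correct, and your treatment of $\mathcal{I}_0$ and $\mathcal{I}_1$ is fine.

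The bookkeeping for $\mathcal{I}_2$, however, is wrong. The map $\iota_* \circ \pi_*$ is grading-preserving (both $\iota_*$ and $\pi_*$ have degree zero in the Gysin sequence), so it cannot be a ``rank-one map from the top grading to the bottom grading.'' In $\mathcal{I}_2[d]$ the monopole part $A$ is $\mathbb{F}_{(d)} \oplus \mathbb{F}_{(d+2)}$ (it is \emph{zero} in the middle grading $d+1$), and one checks directly that $\iota_* \circ \pi_*$ vanishes on both summands: in grading $d+2$ we have $\pi_* = 0$ since $\iota_*$ is onto, while in grading $d$ we have $\iota_* = 0$ since $\pi_*$ is injective. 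Hence $\mathcal{I}_2[d]$ contributes $\mathbb{F}_{(d)} \oplus \mathbb{F}_{(d+2)}$ to $A'$, not a single class in grading $d+1$. A similar analysis of the derived couple shows that $\iota_*' \circ \pi_*'$ also vanishes on this piece, so $\mathcal{I}_2[d]$ contributes $\mathbb{F}_{(d)} \oplus \mathbb{F}_{(d+2)}$ to $A''$ as well. (Your formula $\operatorname{rk} A''_q = 2m_2(q-1)$ cannot be right in any event: $A''$ is a subquotient of $A$, and $A$ vanishes in grading $d+1$ on an $\mathcal{I}_2[d]$-summand.)

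With the corrected contribution tables the inversion still works, but it now requires an alternating sum rather than division by two: from $\operatorname{rk} A''_q = m_2(q) + m_2(q-2)$ and the vanishing of $m_2$ in low gradings one recovers $m_2(n)$ as $\operatorname{rk} A''_n - \operatorname{rk} A''_{n-2} + \operatorname{rk} A''_{n-4} - \cdots$, exactly as the paper does; then $\operatorname{rk} A'_q - \operatorname{rk} A''_q = m_1(q) + m_1(q-1)$ determines the $m_1$'s by induction from below, and finally the ranks of $\HM$ give the $m_0$'s. So your outline survives, but the step you yourself flagged as ``the only nontrivial point'' needs to be redone.
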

\begin{proof}
As above, the Gysin sequence for $\HM(Y, s)$ decomposes into a direct sum of copies of $\mathcal{I}_0$, $\mathcal{I}_1$, and $\mathcal{I}_2$. Let us see how these are related to the ranks of $A'$ and $A''$. First, observe that elements of the monopole Floer homology lying in a complex isomorphic to $\mathcal{I}_0$ do not survive to the subquotient $A'$, as either they are not in the kernel of $\iota_* \circ \pi_*$, or they are in the image of $\iota_* \circ \pi_*$. In contrast, elements of the monopole Floer homology lying in a $\mathcal{I}_1$- or $\mathcal{I}_2$-summand each contribute an $\mathbb{F}$-summand to $A'$. Unwinding the definitions of $\iota_*'$ and $\pi_*'$, a similar result holds for $A''$: the elements of $A'$ represented by elements coming from $\mathcal{I}_1$-summands do not live to the $A''$ subquotient, while those coming from the $\mathcal{I}_2$-summands each contribute an $\mathbb{F}$-summand to $A''$. More precisely, suppose the Gysin sequence is isomorphic to

\[
\left(\bigoplus_{i} \mathcal{I}_0[n_i] \right) \oplus \left( \bigoplus_{j} \mathcal{I}_1[n_j] \right) \oplus \left( \bigoplus_{k} \mathcal{I}_2[n_k] \right).
\]
\noindent
\\
Then the derived groups $A'$ and $A''$ are given by
\begin{equation}\label{eqnap}
A' = \left( \bigoplus_{j} \left((\mathbb{F}_{(n_j)} \oplus \mathbb{F}_{(n_j + 1)} \right) \right) \oplus \left( \bigoplus_{k} \left((\mathbb{F}_{(n_k)} \oplus \mathbb{F}_{(n_k + 2)} \right) \right)
\end{equation}
\noindent
\\
and
\begin{equation}\label{eqnapp}
A'' = \bigoplus_{k} \left((\mathbb{F}_{(n_k)} \oplus \mathbb{F}_{(n_k + 2)} \right).
\end{equation}
\noindent
\\
Now suppose that the ranks of $A''$ are known. Because the monopole Floer homology of $(Y, s)$ is zero in sufficiently low gradings, the same is true for $A''$. Hence the above argument can easily be inverted to determine the numbers $n_k$. Let $d$ be the grading below which $A''$ vanishes. Then for each $n$, the number of copies of $\mathcal{I}_2[n]$ in our decomposition of the Gysin sequence is given by the alternating sum
\[
\dim A''_n - \dim A''_{n-2} + \dim A''_{n-4} \cdots \pm \dim A''_d
\]
if $n = d$ mod $2$, and
\[
\dim A''_{n} - \dim A''_{n-2} + \dim A''_{n-4} \cdots \pm \dim A''_{d+1}
\]
otherwise. Once the placement of the $\mathcal{I}_2$-summands in the decomposition of the Gysin sequence is known, we can similarly determine the summands $\mathcal{I}_1[n_j]$ from the ranks of $A'$. Finally, once both sets of $\mathcal{I}_2$- and $\mathcal{I}_1$-summands are known, the ranks of the original monopole Floer homology determine the placement and number of the $\mathcal{I}_0[n_i]$.
\end{proof}
\noindent
Note that the composition $\iota_*' \circ \pi_*'$ has grading shift $-1$. Hence when the monopole Floer homology is only supported in even dimensions, this map is identically zero and the two groups $A'$ and $A''$ are equal. In this case there are no $\mathcal{I}_1$-summands, and the Gysin sequence is entirely determined by $A'$, exactly as in Theorem \ref{prop6}. \\

\noindent
We now specialize to the case of a plumbed 3-manifold with at most two bad vertices. The following sharpening of Theorem \ref{prop3} was essentially established in \cite{OSS}, with elements appearing previously in \cite{OS}, \cite{Nem3}:

\begin{thm}[Corollary 1.3 of \cite{OSS}]\label{prop99}
Let $Y$ be a rational homology 3-sphere obtained by surgery on a negative-definite graph $\Gamma$ with at most two bad vertices. Let $s$ be a spin$^c$ structure on $Y$, and let $k$ be any characteristic vector on $\Gamma$ whose corresponding spin$^c$ structure on $W(\Gamma)$ limits to $s$. Let $\sigma$ be the rational grading shift
\[
\sigma = \sigma(\Gamma, k) = - \dfrac{1}{4}(|\Gamma| + k^2),
\]
where $|\Gamma|$ is the number of vertices in $\Gamma$. Then the following are true:
\begin{enumerate}
\item $\mathbb{H}^q(\Gamma, k) = 0$ for all $q > 1$,
\item $\HM_{\text{even}}(-Y, s) \cong \mathbb{H}^0(\Gamma, k)[\sigma]$ as graded $\mathbb{F}[U]$-modules, and
\item $\HM_{\text{odd}}(-Y, s) \cong \mathbb{H}^1(\Gamma, k)[\sigma-1]$ as graded $\mathbb{F}[U]$-modules. 
\end{enumerate}
\end{thm}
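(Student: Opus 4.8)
My plan is to treat the two parts of the statement separately: assertion (1) is a purely combinatorial fact about the lattice cohomology, and assertions (2)--(3) then follow from (1) by feeding it into the spectral sequence of \cite{OSS}.

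For (1), the plan is to invoke N\'emethi's reduction theorem for lattice cohomology (see \cite{Nem3}). The point is that a vertex $v$ of $\Gamma$ that is \emph{not} bad behaves, as far as the weight function $w$ is concerned, like a ``rational'' direction: one may contract the coordinate axis spanned by $e_v$ without changing the graded $\mathbb{F}[U]$-module $\mathbb{H}^*(L_\Gamma, w)$, at the cost of passing to a new (more complicated) weight function on the codimension-one sublattice. Carrying this out for every good vertex reduces $\mathbb{H}^*(\Gamma, k)$ to the lattice cohomology of a weighted lattice $(\mathbb{Z}^b, w')$, where $b$ is the number of bad vertices of $\Gamma$ (so $b \le 2$). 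One then only has to observe that for $b \le 2$ each sublevel set $S_n(\mathbb{Z}^b, w')$ is a finite CW subcomplex of the unit-square tiling of the plane, and such a space has no nonzero $\mathbb{F}_2$-homology in degrees $\ge 2$: a putative $2$-cycle would have to cover every edge an even number of times, which is impossible for the left edge of a square in the leftmost occupied column (that edge lies only in that square and possibly in the square immediately to its left, which is not in the cycle). Since $S_n$ is $2$-dimensional we get $H^q(S_n; \mathbb{F}) = 0$ for all $q \ge 2$ and all $n$, so $\mathbb{H}^q(\Gamma, k) = \bigoplus_n H^q(S_n; \mathbb{F}) = 0$ for $q > 1$. (When $b \le 1$ the sublevel sets are disjoint unions of intervals, so $\mathbb{H}^q = 0$ already for $q \ge 1$, recovering the vanishing underlying Theorem \ref{prop3}.)

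For (2) and (3), recall from \cite{OSS} that for a negative-definite plumbing there is a spectral sequence of graded $\mathbb{F}[U]$-modules whose $E_2$-page (up to an overall regrading) is $\bigoplus_q \mathbb{H}^q(\Gamma, k)$ and which converges to $HF^+(-Y, s)$; via \cite{KLT}, \cite{CGH}, \cite{Taub} we identify the latter with $\HM(-Y, s)$ as elsewhere in this paper. The higher differential $d_r$ with $r \ge 2$ changes the lattice-cohomological degree $q$ by $r$. By part (1) the $E_2$-page is concentrated in the two degrees $q = 0$ and $q = 1$, so every $d_r$ with $r \ge 2$ has either zero source or zero target, and the spectral sequence degenerates at $E_2$. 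Consequently $\HM(-Y, s)$ has a filtration with associated graded $\mathbb{H}^0(\Gamma, k) \oplus \mathbb{H}^1(\Gamma, k)$, the $\mathbb{H}^0$-summand carrying the grading shift $\sigma$ of Theorem \ref{prop3} and the $\mathbb{H}^1$-summand the shift $\sigma - 1$, the extra $-1$ being forced by the parity with which the cube-degree $q$ enters the $\mathbb{Z}/2\mathbb{Z}$-grading on $\HM$. Since the two summands then lie in gradings of opposite parity, there is no possible nontrivial extension between them; the filtration splits $U$-equivariantly according to parity, and we obtain $\HM_{\text{even}}(-Y, s) \cong \mathbb{H}^0(\Gamma, k)[\sigma]$ and $\HM_{\text{odd}}(-Y, s) \cong \mathbb{H}^1(\Gamma, k)[\sigma - 1]$ as graded $\mathbb{F}[U]$-modules.

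The main obstacle is step (1): the reduction theorem of \cite{Nem3} is precisely the right tool, but one must check carefully that contracting the good directions really does leave a rank-$b$ problem with $b$ equal to the number of bad vertices, and that the reduced weight function defines the same $\mathbb{F}[U]$-module --- this is where the combinatorics distinguishing ``bad'' from ``good'' vertices genuinely enters. (Alternatively, one could try to run the surgery-exact-sequence induction of \cite{OS} and argue that each bad vertex can raise the top nonvanishing cohomological degree by at most one.) Granting (1), the degeneration of the \cite{OSS} spectral sequence is a pure degree count, and the only remaining work --- tracking the grading shifts $\sigma$ and $\sigma - 1$ and the $U$-module structure under the Kutluhan--Lee--Taubes / Colin--Ghiggini--Honda / Taubes isomorphism --- is routine but convention-heavy bookkeeping.
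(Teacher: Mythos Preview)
The paper does not give its own proof of this theorem: it is stated as a citation (Corollary~1.3 of \cite{OSS}), and the paragraph immediately following the statement only indicates where the pieces come from. Your outline is consistent with what the paper says there --- in particular, your derivation of (2)--(3) from the degeneration of the \cite{OSS} spectral sequence at $E_2$ once (1) is known is precisely the mechanism the paper attributes to \cite{OSS} for part (3).

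One point worth flagging, since it matters for how the theorem is used later in the paper: the paper stresses that assertion (2) was established \emph{directly} in \cite{OS} and \cite{Nem3} via the cobordism-map description of lattice cohomology (the same map sketched in Lemma~\ref{prop8}), not merely as a byproduct of the spectral-sequence collapse. That direct proof is what carries the identification of $\iota_* \circ \pi_*$ with $1+J$ in even gradings for two-bad-vertex manifolds; the spectral-sequence route you take for (3) does not obviously transport this identification, which is exactly why the paper remarks that the analogous statement in odd gradings is only conjectural. So while your uniform spectral-sequence argument is cleaner, for the purposes of the subsequent discussion one really does want the older, more explicit isomorphism for (2). Your sketch of (1) via N\'emethi's reduction to a rank-$b$ lattice and the observation that a finite planar subcomplex of the square tiling supports no $2$-cycles is not discussed in the present paper but is in line with \cite{Nem3}.
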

\noindent
The second assertion in Theorem \ref{prop99} was actually shown in \cite{OS} and \cite{Nem3} using the same map between Heegaard Floer/monopole Floer homology and lattice cohomology as outlined in the proof of Lemma \ref{prop8}. In particular, the identification between $\iota_* \circ \pi_*$ and $1+J$ holds in all even gradings for the two-bad-vertex case. Unfortunately, the proof of the third part of Theorem \ref{prop99} is somewhat less direct and relies on the collapsing of a spectral sequence from lattice homology to Heegaard Floer homology constructed in \cite{OSS}. Thus it is not clear (although certainly a reasonable conjecture) that $\iota_* \circ \pi_*$ coincides with $1 + J$ in odd gradings also. \\
\\
We now give a computation of the Pin(2)-homology of a two-bad-vertex manifold in which the relative simplicity of the Floer homology allows us to determine the derived groups $A'$ and $A''$ algebraically.

\begin{exmp}[Example 4.4.1 of \cite{Nem3}]\label{extwobadv}
Let $\Gamma$ be the plumbing diagram given in Figure \ref{fig3}.

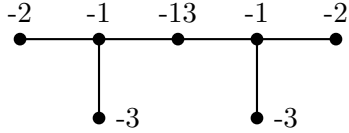
\begin{figure}[h!]
\begin{tikzpicture}[thick,scale=0.7]%

	\draw (0, 0) node[circle, draw, fill=black!100, inner sep=0pt, minimum width=4pt, label=-13] {};
	\draw (1.5, 0) node[circle, draw, fill=black!100, inner sep=0pt, minimum width=4pt, label=-1] {}; 
	\draw (-1.5, 0) node[circle, draw, fill=black!100, inner sep=0pt, minimum width=4pt, label=-1] {}; 
	\draw (3, 0) node[circle, draw, fill=black!100, inner sep=0pt, minimum width=4pt, label=-2] {};  
	\draw (-3, 0) node[circle, draw, fill=black!100, inner sep=0pt, minimum width=4pt, label=-2] {};
	\draw (1.5, -1.5) node[circle, draw, fill=black!100, inner sep=0pt, minimum width=4pt, label=right:-3] {}; 
	\draw (-1.5, -1.5) node[circle, draw, fill=black!100, inner sep=0pt, minimum width=4pt, label=right:-3] {}; 
	
	\path[-,font=\scriptsize,>=angle 90]
	(0, 0) edge (1.5, 0)
	(0, 0) edge (-1.5, 0)
	(1.5, 0) edge (3, 0)
	(-1.5, 0) edge (-3, 0)
	(1.5, 0) edge (1.5, -1.5)
	(-1.5, 0) edge (-1.5, -1.5);
\end{tikzpicture}
\caption{Plumbing diagram $\Gamma$ for Example \ref{extwobadv}.}\label{fig3}
\end{figure}

\noindent
One can check that $\Gamma$ is unimodular and thus that the plumbed manifold $Y = \partial W(\Gamma)$ is an integer homology sphere. In Example 4.4.1 of \cite{Nem3} a specific characteristic vector $k$ on $\Gamma$ is given for which the corresponding grading shift can be calculated to be $\sigma(\Gamma, k) = 2$. With respect to this $k$, the zeroth lattice cohomology is shown to be
\[
\mathbb{H}^0(\Gamma, k) = \mathcal{U}_{-2}^+ \oplus \mathbb{F}_{(-2)} \oplus \mathbb{F}_{(0)} \oplus \mathbb{F}_{(0)}.
\]
It is also established that the first lattice cohomology consists of a single generator in the sublevel set of weight zero. Taking into account the grading shift $\sigma$, the monopole Floer homology of $-Y$ is thus given by
\[
\HM(-Y) = \mathcal{U}_0^+ \oplus \mathbb{F}_{(0)} \oplus \mathbb{F}_{(1)} \oplus \mathbb{F}_{(2)} \oplus \mathbb{F}_{(2)}.
\]
We now apply Lemma \ref{prop8} to compute the derived group $A'$ in even gradings. One can verify that the $J$-action on the lattice cohomology is as follows. In the lowest (shifted) grading zero, the lattice cohomology consists of a symmetric pair of connected components which are taken to each other under the action of $J$. (In the decomposition above, the generator of $\mathcal{U}_0^+$ in grading zero is represented by the sum of these two components, while the summand $\mathbb{F}_{(0)}$ is represented by either one.) In (shifted) grading two, the lattice cohomology consists of three connected components, two of which occur in a symmetric pair and the third of which has nontrivial cohomology and is taken to itself by $J$. (Again, the generator of $\mathcal{U}_0^+$ in grading two corresponds to the sum of all three of these, while $\mathbb{F}_{(2)} \oplus \mathbb{F}_{(2)}$ is represented by the symmetric pair.) In all other even gradings $J$ is the identity. Applying Lemma \ref{prop8}, this shows that the even part of $A'$ is given by $\mathbb{F}_{(2)} \oplus \mathbb{F}_{(4)} \oplus \mathbb{F}_{(6)} \oplus \cdots$. Moreover, observe that the odd part of the Floer homology consists of a single generator in grading one. Hence the action of $\iota_* \circ \pi_*$ in odd gradings is either zero or the identity, and since $\iota_* \circ \pi_*$ squares to zero, it cannot be the identity. This shows that
\[
A' = \mathbb{F}_{(1)} \oplus \left(\mathbb{F}_{(2)} \oplus \mathbb{F}_{(4)} \oplus \mathbb{F}_{(6)} \oplus \cdots\right).
\]
With $A'$ in hand, we now wish to compute $A''$. Comparing our expression for $A'$ with the form of (\ref{eqnap}) and (\ref{eqnapp}), however, we see that the action of $\iota_*' \circ \pi_*'$ is already algebraically determined. Indeed, the only way for our computation of $A'$ to be consistent with the fact that $A''$ consists of pairs of generators separated by a grading difference of two is for $\iota_*' \circ \pi_*'$ to be an isomorphism from $\mathbb{F}_{(2)}$ to $\mathbb{F}_{(1)}$ and zero everywhere else. Applying Theorem \ref{prop9}, this shows that the Gysin sequence is given by
\[
\mathcal{I}_0[0] \oplus \mathcal{I}_1[1] \oplus \mathcal{I}_0[2] \oplus \left( \bigoplus_{n\geq0} \mathcal{I}_2[4 + 2n] \right).
\]
Moreover, it turns out that in this case the Gysin sequence determines the $\mathbb{F}[V]$-module structure. A similar argument as in Theorem \ref{prop6} shows that
\[
\HS(-Y) = (\mathcal{V}_2^+ \oplus \mathcal{V}_1^+ \oplus \mathcal{V}_4^+) \oplus \mathbb{F}_{(0)} \oplus \mathbb{F}_{(2)}.
\]
In particular, we have $\alpha = 2$ and $\beta = \gamma = 0$.
\end{exmp}

\noindent
We close with a conjecture on the computation of $A'$ and $A''$ in the general case. Let $Y$ be a plumbed 3-manifold with at most two bad vertices. As remarked previously, one obvious conjecture would be to identify $\iota_* \circ \pi_*$ with $1 + J$ in both even and odd gradings. Slightly more subtle is the question of finding an analogue of the map $\iota_*' \circ \pi_*'$ in lattice cohomology. For this, we proceed by finding a different Gysin sequence into which the lattice cohomology fits. \\
\\
Following Theorem \ref{prop99}, consider the graded $\mathbb{F}[U]$-module
\[
\mathbb{H}^{\textit{tot}}(\Gamma, k) = \mathbb{H}^0(\Gamma, k) \oplus \mathbb{H}^1(\Gamma, k)[-1].
\]
Each sublevel set $S_n$ in the lattice cohomology of $Y$ fits into the usual Gysin sequence of spaces relating the regular cohomology of $S_n$ with its Borel $\mathbb{Z}/2\mathbb{Z}$-equivariant cohomology. By summing the Gysin sequences of all these sublevel sets together (with the caveat that increasing degree in singular cohomology corresponds to decreasing grading in the total lattice complex), we obtain an exact sequence relating $\mathbb{H}^{\textit{tot}}(\Gamma, k)$ with the Borel equivariant cohomology of the lattice complex. Viewing this as an exact couple as before, we similarly obtain derived groups $B'$ and $B''$, the first of which is the derived lattice cohomology $\ker(1 + J)/ \im(1 + J)$. We then have:

\begin{conj}\label{conj}
Let $Y$ be a plumbed 3-manifold with at most two bad vertices and let $s$ be a self-conjugate spin$^c$ structure on $Y$. Then the derived groups $A'$ and $A''$ in the Gysin sequence (\ref{eq1}) for $(-Y, s)$ are isomorphic (up to a grading shift) to the derived groups $B'$ and $B''$ in the Gysin sequence of spaces for the lattice cohomology.
\end{conj}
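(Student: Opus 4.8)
The plan is to reduce Conjecture \ref{conj} to a single equivariance statement about the isomorphism of Theorem \ref{prop99} and then push it through the formal algebra of derived exact couples. Both Gysin exact couples have, up to grading shift, the same ``$A$-vertex'': by Theorem \ref{prop99} there is a graded $\mathbb{F}[U]$-module isomorphism $\HM(-Y, s) \cong \mathbb{H}^{\textit{tot}}(\Gamma, k)[\sigma]$. The first step is to upgrade this to an isomorphism intertwining the Floer involution $j$ with the lattice involution $J$. In even gradings this is immediate from the argument of Lemma \ref{prop8}: there the isomorphism is realized by the cobordism maps $x \mapsto \phi_x$, and the $j$-symmetry of the four-dimensional Seiberg--Witten equations gives $\phi_x(k') = \phi_{jx}(-k')$, which is exactly the reflection defining $J$. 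This is why $\iota_* \circ \pi_*$ and $1 + J$ already agree in even gradings, as noted after Theorem \ref{prop99}.

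The difficulty is the odd gradings, where the identification of the odd part of $\HM(-Y, s)$ with $\mathbb{H}^1(\Gamma, k)[\sigma - 1]$ due to \cite{OSS} is produced only indirectly, through the collapse of the spectral sequence from lattice homology to Heegaard Floer homology. Here I would try to show that this spectral sequence carries a $j$-action inducing $J$ on its $E_2$-page, that it is $j$-equivariant on every page, and --- the real point --- that the resulting identification of $E_\infty$ with the associated graded can be promoted to a $j$-equivariant isomorphism of the filtered modules themselves. The modules in play (graded roots together with a comparatively small $\mathbb{H}^1$-contribution) are rigid enough over $\mathbb{F}_2$ that one should be able to choose a $J$-equivariant splitting of the filtration, but \emph{extracting an equivariant splitting from an equivariant associated graded is the main obstacle}. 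It is exactly the point flagged as ``not clear'' before the conjecture: a proof of the odd-grading equivariance of $\iota_* \circ \pi_* = 1 + J$ would be the heart of the argument. A cleaner alternative, if available, would be a direct construction of the odd-grading isomorphism via cobordism maps, parallel to Lemma \ref{prop8}, making the $j$-equivariance manifest.

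Granting the equivariant identification $\HM(-Y, s) \cong \mathbb{H}^{\textit{tot}}(\Gamma, k)[\sigma]$ intertwining $\iota_* \circ \pi_*$ with $1 + J$, the isomorphism $A' \cong B'$ is then purely formal: $A' = H(\HM(-Y, s), \iota_* \circ \pi_*)$ and $B' = H(\mathbb{H}^{\textit{tot}}(\Gamma, k), 1 + J) = \ker(1+J)/\im(1+J)$ are the homologies of the same $\mathbb{F}[U]$-module with respect to corresponding square-zero differentials, so the isomorphism descends to the subquotients with grading shift $\sigma$. This gives the first half of the conjecture.

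The second half, $A'' \cong B''$, is more delicate, because these are secondary (Massey-type) operations: they are not determined by the $A$-vertex and the primary differential alone, but depend on the full exact couples --- on $\HS(-Y, s)$ with its $Q$-action on the Floer side, and on $\bigoplus_n H^*_{\mathbb{Z}/2\mathbb{Z}}(S_n)$ with its $Q$-action on the lattice side. One cannot compare the full couples outright, since on the lattice side $Q$ acts with no nilpotence whereas on the Floer side $Q^3 = 0$. Instead I would argue sublevel set by sublevel set: each $S_n$ is a $\mathbb{Z}/2\mathbb{Z}$-space, and the goal would be to identify its Borel Gysin sequence --- truncated so that at most two applications of $Q$ occur --- with the corresponding truncation of the Pin(2) Floer Gysin sequence, via a Pin(2)-equivariant refinement of the cobordism maps $\HM(W(\Gamma), k')$. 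Since the derived differentials $(1+J)'$ and $\iota_*' \circ \pi_*'$ each involve only two applications of $Q$, the secondary operations live inside this truncated range, where the $\mathbb{Z}/2\mathbb{Z}$-Borel and Pin(2) pictures coincide; combined with the equivariance from the first two steps, this should force $A'' \cong B''$. Constructing the required map of (truncated) exact couples, rather than merely of $A$-vertices, is the technical core of this half; Example \ref{extwobadv}, where the odd part of $\HM$ is one-dimensional and both derived differentials are forced by $Q^3 = 0$, is a reassuring check that the bookkeeping works out in practice.
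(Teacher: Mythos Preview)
The statement you are attempting to prove is labeled a \emph{Conjecture} in the paper, and the paper does not supply a proof. The author explicitly flags the odd-grading identification of $\iota_* \circ \pi_*$ with $1+J$ as ``not clear (although certainly a reasonable conjecture)'' and offers only the supporting evidence of Example~\ref{extwobadv} together with the remark that the conjecture ``can also be verified in a number of other examples which are similar in nature.'' There is therefore no paper proof to compare your proposal against.

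Your proposal is not a proof either, and you are candid about this: you correctly isolate the odd-grading $j$-equivariance of the Ozsv\'ath--Stipsicz--Szab\'o isomorphism as the crux, and you acknowledge that promoting an equivariant associated graded to an equivariant filtered isomorphism is a genuine obstacle. That diagnosis matches the paper's own assessment of where the difficulty lies. Your suggested route through a $j$-equivariant spectral sequence is plausible as a strategy, but note that even granting full equivariance of the $A$-vertex map, your argument for $A'' \cong B''$ remains a sketch: the ``truncated exact couple'' comparison you describe would require constructing a Pin(2)-equivariant lift of the cobordism maps compatible with the lattice-side Borel structure, and this is new input beyond anything in the paper. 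In short, your outline identifies the right gaps, but filling them would constitute original work going well past what the paper claims.
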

\noindent
Note that the Pin(2)-homology is not isomorphic to the Borel equivariant cohomology of the lattice complex, even as an abelian group. Our claim is instead that the maps $\iota_* \circ \pi_*$ and $\iota_*' \circ \pi_*'$ in the two Gysin sequences coincide. It can be shown that for manifolds with at most two bad vertices, $B'$ and $B''$ indeed have the form of (\ref{eqnap}) and (\ref{eqnapp}) and that the higher derived groups vanish. \\
\\
Conjecture \ref{conj} is certainly true in Example \ref{extwobadv}, and can also be verified in a number of other examples which are similar in nature. We expect that some lattice-cohomology characterization of $\alpha$, $\beta$, and $\gamma$ as in Theorem \ref{prop7} holds for two-bad-vertex manifolds, and indicate this as a further area of research and possible application to the Pin(2)-homology of connected sums (see e.g.\ \cite{Stoff2}, \cite{Lin3}). (Indeed, several partial results along the lines of Example \ref{extwobadv} can already be obtained, but a full treatment of these lies outside the scope of this paper.)
\end{subsection}
\end{section}

\end{document}